\newtheorem{theorem}{Theorem}[section]
\newtheorem{lemma}[theorem]{Lemma}
\newtheorem{corollary}[theorem]{Corollary}
\newtheorem{proposition}[theorem]{Proposition}
\newtheorem{example}[theorem]{Example}
\newtheorem{problem}[theorem]{Problem}
\newtheorem*{thmsc}{Theorem SC}
\newtheorem*{thmsas}{Theorem S}
\newtheorem*{thmms}{Theorem MS}
\newtheorem*{thmdc}{Theorem DC}
\newtheorem*{thmaag}{Theorem A\!$^2$G}
\numberwithin{equation}{section}
\long\def\symbolfootnote[#1]#2{\begingroup%
\def\thefootnote{\fnsymbol{footnote}}\footnote[#1]{#2}\endgroup}
\begin{document}

\def\C{{\mathbb C}}
\def\N{{\mathbb N}}
\def\Z{{\mathbb Z}}
\def\R{{\mathbb R}}
\def\B{{\cal B}}
\def\P{{\cal P}}
\def\epsilon{\varepsilon}
\def\phi{\varphi}
\def\leq{\leqslant}
\def\geq{\geqslant}
\def\slim{\mathop{\hbox{$\overline{\hbox{\rm lim}}$}}\limits}
\def\ilim{\mathop{\hbox{$\underline{\hbox{\rm lim}}$}}\limits}
\def\dim{\hbox{\tt dim}\,}
\def\spann{\hbox{\tt span}\,}

\title{A weighted bilateral shift with cyclic square is supercyclic}

\author{Stanislav Shkarin}

\date{}

\maketitle

\begin{abstract} It is shown that for a bounded weighted bilateral
shift $T$ acting on $\ell_p(\Z)$ for $1\leq p\leq 2$ supercyclicity
of $T$, weak supercyclicity of $T$, cyclicity of $T\oplus T$ and
cyclicity of $T^2$ are equivalent. A new sufficient condition for
cyclicity of a weighted bilateral shift is proved, which implies, in
particular, that any compact weighted bilateral shift is cyclic.
\end{abstract}

\small \noindent{\bf MSC:} \ \ 47A16, 37A25

\noindent{\bf Keywords:} \ \ Cyclicity, Supercyclicity,
Hypercyclicity, Quasisimilarity, Weighted bilateral shifts
\normalsize

\section{Introduction \label{s1}}\rm

Throughout\symbolfootnote[0]{Partially supported by Plan Nacional
I+D+I grant no. MTM2006-09060 and Junta de Andaluc\'{\i}a FQM-260
and British Engineering and Physical Research Council Grant
GR/T25552/01.} this article all vector spaces are supposed to be
over the field $\C$ of complex numbers, $\Z$ is the set of integers,
$\Z_+$ is the set of non-negative integers and $\N$ is the set of
positive integers. As usual, symbol $L(\B)$ stands for the space of
bounded linear operators on a Banach space $\B$ and $\B^*$ is the
space of continuous linear functionals on $\B$.

For $w\in\ell_\infty(\Z)$ and $1\leq p\leq\infty$, $T_{w,p}$ stands
for the bounded linear operator acting on $\ell_p(\Z)$ if $1\leq
p<\infty$ or $c_0(\Z)$ if $p=\infty$, defined on the canonical basis
$\{e_n\}_{n\in\Z}$ by
$$
T_{w,p}e_n=w_ne_{n-1}\qquad \text{for\ \ }n\in\Z.
$$
If additionally $w_n\neq 0$ for each $n\in\Z$, the operator
$T_{w,p}$ is called the {\it weighted bilateral shift} with the
weight sequence $w$. In the particular case $w_n\equiv1$ we have the
{\it unweighted bilateral shift}.

Recall that a bounded linear operator $T$ on a Banach space $\B$ is
called {\it cyclic} if there exists $x\in \B$ such that
$\spann\{T^nx:n\in\Z_+\}$ is dense in $\B$. $T$ is called {\it
supercyclic} if there is $x\in \B$ for which $\{\lambda
T^nx:\lambda\in\C,\ n\in\Z_+\}$ is dense in $\B$. Similarly $T$ is
called {\it hypercyclic} if there is $x\in \B$ such that the orbit
$\{T^nx:x\in\Z_+\}$ is dense in $\B$. Finally $T$ is called {\it
weakly supercyclic} or {\it weakly hypercyclic} if the density is
required with respect to the weak topology. We refer to surveys
\cite{ge1,ge2,msa} for additional information on hypercyclicity and
supercyclicity. One of the attractive features of weakly supercyclic
operators is that all their powers are again weakly supercyclic and
therefore cyclic. For norm supercyclicity this statement was proved
by Ansari \cite{ansa} and the same proof works for weak
supercyclicity.

\smallskip

Cyclicity properties of weighted bilateral shifts have been
intensely studied. Hypercyclicity and supercyclicity of weighted
bilateral shifts were characterized by Salas \cite{sal,sal1} in
terms of the weight sequences. It was observed in
\cite[Proposition~5.1]{shk1} that the Salas conditions admit the
following simpler equivalent form.

\begin{thmsas}
For $1\leq p\leq \infty$, a weighted bilateral shift $T=T_{w,p}$ is
hypercyclic if and only if for any $m\in\Z_+$,
\begin{equation}
\ilim\limits_{n\to\infty} \max\{\widetilde{w}(m-n+1,m),
(\widetilde{w}(m+1,m+n))^{-1}\}=0 \label{sal3}
\end{equation}
and $T$ is supercyclic if and only if for any $m\in\Z_+$,
\begin{equation}
\ilim\limits_{n\to+\infty}
\widetilde{w}(m-n+1,m)\widetilde{w}(m+1,m+n)^{-1}=0, \label{sal4}
\end{equation}
where
\begin{equation}
\widetilde{w}(a,b)=\prod_{j=a}^b\,|w_j|\quad \ \ \text{for}\ \
a,b\in\Z\ \text { with } \ a\leq b. \label{beta}
\end{equation}
\end{thmsas}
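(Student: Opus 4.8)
The statement merely repackages Salas's characterizations \cite{sal,sal1}, so the plan is to recast those characterizations in a convenient ``common sequence'' form and then to prove a window-comparison lemma that carries all the work; the equivalences with (\ref{sal3}) and (\ref{sal4}) would then come out by a short diagonalization. For $m\in\Z$ and $n\in\N$ write
\begin{equation}
\alpha(m,n)=\widetilde w(m-n+1,m),\qquad \beta(m,n)=\widetilde w(m+1,m+n),
\end{equation}
so that $T_{w,p}^n$ sends $e_m$ to a scalar multiple of $e_{m-n}$ of norm $\alpha(m,n)$, while for a suitable scalar multiple $y_{m,n}$ of $e_{m+n}$ one has $T_{w,p}^ny_{m,n}=e_m$ and $\|y_{m,n}\|=\beta(m,n)^{-1}$. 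Unwinding Salas's theorems on the canonical basis (equivalently: a weighted bilateral shift is hypercyclic, resp.\ supercyclic, precisely when it satisfies the Hypercyclicity, resp.\ Supercyclicity, Criterion, with dense set $\spann\{e_m:m\in\Z\}$ and right inverses $S_{n_k}e_m=y_{m,n_k}$), one obtains: $T_{w,p}$ is hypercyclic iff there is a strictly increasing $(n_k)$ in $\N$ with $\alpha(m,n_k)\to0$ and $\beta(m,n_k)^{-1}\to0$ for every $m\in\Z$; and $T_{w,p}$ is supercyclic iff there is such a sequence with $\alpha(i,n_k)\beta(j,n_k)^{-1}\to0$ for all $i,j\in\Z$. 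I would take these two reformulations as known.

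The engine would be the following estimate: for each fixed $q\in\Z_+$ the windows attached to all $m$ with $|m|\leq q$ are comparable, uniformly in $n$, up to a constant depending only on $q$ and $w$. With $M=\max\{1,\|w\|_\infty\}$ and $c_q=\min\{1,\min_{|j|\leq q}|w_j|\}\in(0,1]$, put $C_q=(M/c_q)^{2q}$; then
\begin{equation}
\alpha(m,n)\leq C_q\,\alpha(q,n)\quad\text{and}\quad \beta(m,n)^{-1}\leq C_q\,\beta(q,n)^{-1}\qquad(n\in\N,\ |m|\leq q).\label{cmpr}
\end{equation}
To see the first inequality, note that turning the length-$n$ window $[m-n+1,m]$ into $[q-n+1,q]$ deletes the (at most $2q$) indices $m-n+1,\dots,q-n$, which lie within distance $q$ of $-n$ and so contribute to $\widetilde w$ a factor $\leq M^{2q}$, and inserts the indices $m+1,\dots,q$, which lie in $[-q+1,q]$ and contribute a factor $\geq c_q^{2q}$; hence $\alpha(q,n)\geq\alpha(m,n)\,c_q^{2q}M^{-2q}$ (when $n\leq q-m$ the two windows are disjoint and the same bound is immediate). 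The second inequality is the identical computation for $[m+1,m+n]$ against $[q+1,q+n]$, the deleted indices now lying within distance $q$ of $+n$.

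Granting (\ref{cmpr}), the two equivalences would fall out quickly. If $T_{w,p}$ is hypercyclic, an admissible $(n_k)$ makes $\max\{\alpha(m,n_k),\beta(m,n_k)^{-1}\}\to0$ for each $m$, which gives (\ref{sal3}) for all $m\in\Z_+$. Conversely, assume (\ref{sal3}) for all $m\in\Z_+$ and fix $q\in\Z_+$; by (\ref{cmpr}),
\[
\max_{|m|\leq q}\max\{\alpha(m,n),\beta(m,n)^{-1}\}\leq C_q\max\{\alpha(q,n),\beta(q,n)^{-1}\},
\]
whose lower limit in $n$ is $0$ by (\ref{sal3}) at $m=q$, so one may pick $n_q>n_{q-1}$ making the left side $<1/q$; for every fixed $m$ and all $k\geq|m|$ one then has $\max\{\alpha(m,n_k),\beta(m,n_k)^{-1}\}<1/k$, so $(n_k)$ certifies hypercyclicity. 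The supercyclic case would be word-for-word the same: taking $i=j=m$ in any Supercyclicity-Criterion sequence forces (\ref{sal4}) at $m$, while conversely $\alpha(i,n)\beta(j,n)^{-1}\leq C_q^2\,\alpha(q,n)\beta(q,n)^{-1}$ for $|i|,|j|\leq q$, together with (\ref{sal4}) at $m=q$, feeds the same diagonal construction and produces $(n_k)$ with $\alpha(i,n_k)\beta(j,n_k)^{-1}\to0$ for all $i,j\in\Z$.

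I expect the only genuinely delicate point to be this last passage: extracting from the pointwise liminf conditions (\ref{sal3}) and (\ref{sal4}) one sequence $(n_k)$ that works for all basis vectors at once. A naive diagonalization is hopeless, since two sequences each with liminf $0$ need share no good subsequence; estimate (\ref{cmpr}) is exactly what removes the obstruction, collapsing the doubly-indexed family of windows onto the one-parameter families $\{\max\{\alpha(q,n),\beta(q,n)^{-1}\}\}_{q}$ and $\{\alpha(q,n)\beta(q,n)^{-1}\}_{q}$, for which diagonalizing is trivial. The proof of (\ref{cmpr}) itself is the elementary window bookkeeping sketched above, but that is where all the content of the theorem sits.
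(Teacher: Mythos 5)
Your argument is essentially correct, but note that the paper itself offers no proof of Theorem~S to compare against: it is quoted as known, with the characterizations attributed to Salas \cite{sal,sal1} and the simplified liminf form to \cite[Proposition~5.1]{shk1}. What you actually prove is exactly that reformulation step, and you do it soundly: the window-comparison estimate $\alpha(m,n)\leq C_q\,\alpha(q,n)$, $\beta(m,n)^{-1}\leq C_q\,\beta(q,n)^{-1}$ for $|m|\leq q$ is correct (the shifted indices near $\mp n$ contribute at most $M^{2q}$, the indices in $[-q+1,q]$ at least $c_q^{2q}>0$ since all $w_j\neq0$, and your remark covering the disjoint-window case $n\leq q-m$ holds as well), and it is precisely what legitimizes the diagonal extraction of a single sequence $\{n_k\}$ from the pointwise conditions (\ref{sal3}), (\ref{sal4}) taken only at $m\in\Z_+$; the forward implications are immediate by specializing to $i=j=m$. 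The one ingredient you assume --- that hypercyclicity (resp.\ supercyclicity) of a weighted bilateral shift is equivalent to the existence of a common sequence $\{n_k\}$ along which $\alpha(m,n_k)\to0$ and $\beta(m,n_k)^{-1}\to0$ (resp.\ $\alpha(i,n_k)\beta(j,n_k)^{-1}\to0$) --- is a fair restatement of Salas's original conditions (whose built-in uniformity over $|j|\leq q$ makes that equivalence a trivial diagonalization), or of Theorems~SC and MS together with the B\'es--Peris criterion, and assuming it is consistent with the theorem's attribution; just state explicitly that this is the form of Salas's result you start from, so the reader sees that the content you add is the passage to the $m$-by-$m$ liminf conditions.
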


On the other hand, cyclicity of a weighted bilateral shift turns out
to be a much more subtle issue, see, for instance,
\cite{her0,her2,nik,shields}. It is worth noting that unlike for
hypercyclicity or supercyclicity, cyclicity of a weighted bilateral
shift depends on $p$. For instance, the unweighted bilateral shift
is cyclic on $\ell_2(\Z)$ and non-cyclic on $\ell_1(\Z)$. There
exist several necessary and several sufficient conditions of
cyclicity of a weighted bilateral shift, see for instance, the works
of Herrero \cite{her0,her2}. Prominent among these conditions is the
observation that if the adjoint of a weighted bilateral shift $T$ on
$\ell_p(\Z)$ for $1\leq p<\infty$ has non-empty point spectrum then
$T$ is non-cyclic. This implies, in particular, that the weighted
bilateral shift $T_{w,p}$ with the weight sequence $w_n=a$ for
$n\leq 0$ and $w_n=b$ for $n>0$ with $0<|b|<|a|$ is non-cyclic for
any $p\in[1,\infty]$. This is precisely the shape of the first
example of a non-cyclic weighted bilateral shift, obtained by
Beauzamy \cite{boze}.

Recall that a bounded linear operator $T$ on a Banach space $\B$ is
said to satisfy the {\it Supercyclicity Criterion} \cite{msa} if
there exist a strictly increasing sequence $\{n_k\}_{k\in\Z_+}$ of
positive integers, dense subsets $E$ and $F$ of $\B$ and a map
$S:F\to F$ such that $TSy=y$ for each $y\in F$ and
$\|T^{n_k}x\|\|S^{n_k}y\|\to 0$ as $k\to \infty$ for any $x\in E$
and $y\in F$. The following two results are proved in \cite{msa}.

\begin{thmsc}\it Any operator satisfying the Supercyclicity
Criterion is supercyclic. \end{thmsc}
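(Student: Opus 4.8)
The plan is to run the standard Baire category argument. Fix a countable dense subset $\{y_j\}_{j\in\N}$ of $\B$ and, for $j,m\in\N$, set $U_{j,m}=\{x\in\B:\|\lambda T^nx-y_j\|<1/m\text{ for some }\lambda\in\C\text{ and }n\in\Z_+\}$. Since for each fixed $\lambda$ and $n$ the map $x\mapsto\lambda T^nx-y_j$ is continuous, each $U_{j,m}$ is open, and a direct check (approximating an arbitrary target by the $y_j$ with $1/m\to0$) shows that $x$ is a supercyclic vector for $T$ exactly when $x\in\bigcap_{j,m}U_{j,m}$. As $\B$ is complete, it therefore suffices by the Baire category theorem to prove that each $U_{j,m}$ is dense. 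We may assume $\B\neq\{0\}$ (otherwise the statement is vacuous) and that $y_j\neq0$ (for $y_j=0$ one has $U_{j,m}=\B$).

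Fix $j,m$, a vector $z\in\B$ and $\epsilon>0$; I must find $x\in U_{j,m}$ with $\|x-z\|<\epsilon$. Using density of $E$, pick $u\in E$ with $\|u-z\|<\epsilon/2$, and using density of $F$, pick $v\in F$ with $v\neq0$ and $\|v-y_j\|<1/(2m)$. The key idea is to perturb $u$ along the backward direction supplied by $S$: for a large index $k$ and a nonzero scalar $\alpha_k$ to be chosen, put $x_k=u+\alpha_k S^{n_k}v$. Since $TSy=y$ for all $y\in F$ and $S$ maps $F$ into $F$, an easy induction gives $T^{n}S^{n}v=v$ for every $n$, hence $T^{n_k}x_k=T^{n_k}u+\alpha_k v$; taking $\lambda=\alpha_k^{-1}$ yields $\lambda T^{n_k}x_k=\alpha_k^{-1}T^{n_k}u+v$.

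It remains to choose $k$ and $\alpha_k$ so that both required inequalities hold. From $\|u-z\|<\epsilon/2$ we see $\|x_k-z\|<\epsilon$ as soon as $|\alpha_k|\,\|S^{n_k}v\|<\epsilon/2$, and from $\|v-y_j\|<1/(2m)$ we see $\|\lambda T^{n_k}x_k-y_j\|<1/m$ as soon as $|\alpha_k|^{-1}\|T^{n_k}u\|<1/(2m)$, i.e. $|\alpha_k|>2m\|T^{n_k}u\|$. Note $\|S^{n_k}v\|>0$ because $T^{n_k}S^{n_k}v=v\neq0$; so a scalar $\alpha_k$ satisfying both constraints exists precisely when $4m\,\|T^{n_k}u\|\,\|S^{n_k}v\|<\epsilon$ (if $\|T^{n_k}u\|=0$ any small enough $|\alpha_k|>0$ works). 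Since $u\in E$ and $v\in F$, the Supercyclicity Criterion forces $\|T^{n_k}u\|\,\|S^{n_k}v\|\to0$ as $k\to\infty$, so this inequality holds for all large $k$; fixing such a $k$ together with a corresponding $\alpha_k$ produces the desired $x=x_k\in U_{j,m}$, completing the argument.

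There is no genuine obstacle beyond bookkeeping. The crux is the construction $x_k=u+\alpha_k S^{n_k}v$ with the compensating scalar $\lambda=\alpha_k^{-1}$: the second summand is engineered so that $T^{n_k}$ reproduces $v\approx y_j$, while the scaling simultaneously shrinks the image of the first summand below $1/(2m)$; the freedom to choose $\lambda$ (which is absent in the hypercyclic case) is exactly what allows the single product condition $\|T^{n_k}u\|\,\|S^{n_k}v\|\to0$ to supply both estimates at once. The only subtlety worth flagging is the reduction to a nonzero target $y_j$ and the choice of $v\neq0$, so that $\alpha_k$ and $\lambda$ are well defined.
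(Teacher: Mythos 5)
Your argument is correct. Note that the paper itself gives no proof of Theorem~SC --- it is quoted from \cite{msa} --- so there is nothing internal to compare with; what you have written is essentially the standard Baire-category proof of that result. The key steps all check out: each $U_{j,m}$ is open, the supercyclic vectors are exactly $\bigcap_{j,m}U_{j,m}$, the identity $T^{n}S^{n}v=v$ is legitimate because $S$ maps $F$ into $F$ and $TS=\mathrm{id}$ on $F$, and the perturbation $x_k=u+\alpha_k S^{n_k}v$ with the compensating scalar $\lambda=\alpha_k^{-1}$ reduces everything to finding $\alpha_k$ with $2m\|T^{n_k}u\|<|\alpha_k|<\epsilon/(2\|S^{n_k}v\|)$, which the product condition of the Criterion supplies for large $k$ (and you correctly handle the degenerate cases $y_j=0$, $\|T^{n_k}u\|=0$, and $\|S^{n_k}v\|>0$). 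The one hypothesis you use silently is separability of $\B$, needed to pick the countable dense set $\{y_j\}$; this is a standing assumption in \cite{msa} and in the present paper (where $\B=\ell_p(\Z)$ or $c_0(\Z)$), so it is a caveat worth stating rather than a gap.
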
\rm

\begin{thmms}A weighted bilateral shift on $\ell_p(\Z)$ for $1\leq p<\infty$ or
on $c_0(\Z)$ is supercyclic if and only if it satisfies the
Supercyclicity Criterion. \end{thmms}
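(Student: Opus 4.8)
The ``only if'' direction carries all the content, since the converse is immediate from Theorem SC. So assume $T=T_{w,p}$ is a supercyclic weighted bilateral shift; in particular $w_n\ne0$ for every $n$. The plan is to verify the Supercyclicity Criterion with $E=F=\spann\{e_n:n\in\Z\}$ — dense in $\ell_p(\Z)$ for $p<\infty$ and in $c_0(\Z)$ — and with $S:F\to F$ the formal inverse shift $Se_n=w_{n+1}^{-1}e_{n+1}$. This $S$ is well defined precisely because no weight vanishes, it maps $F$ into $F$, and $TSe_n=e_n$, so $TSy=y$ on $F$. Since $\|T^nx\|\le\sum_i|c_i|\,\|T^ne_i\|$ for a finitely supported $x=\sum_ic_ie_i$ (and likewise for $S^ny$), everything reduces to producing a single strictly increasing sequence $\{n_k\}$ of positive integers along which $\|T^{n_k}e_i\|\,\|S^{n_k}e_j\|\to0$ for \emph{every} pair $i,j\in\Z$.

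A direct computation gives $T^ne_i=\widetilde w(i-n+1,i)\,e_{i-n}$ and $S^ne_j=\widetilde w(j+1,j+n)^{-1}e_{j+n}$, so $\|T^ne_i\|\,\|S^ne_j\|=\widetilde w(i-n+1,i)\,\widetilde w(j+1,j+n)^{-1}$. Writing $\widetilde w(a,b)=\beta_b/\beta_{a-1}$ for the telescoping sequence normalized by $\beta_0=1$, this product equals $\beta_i\beta_j/(\beta_{i-n}\beta_{j+n})$, and the supercyclicity part of Theorem S, i.e.\ condition \eqref{sal4}, says exactly that for every $m\in\Z_+$ one has $\ilim_{n\to\infty}\beta_m^2/(\beta_{m-n}\beta_{m+n})=0$; equivalently, $\beta_{m-n}\beta_{m+n}\to\infty$ along some subsequence of $n$'s.

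The main obstacle is that \eqref{sal4} supplies, for each fixed pair $(i,j)$, only \emph{its own} subsequence of $n$'s making the product small, whereas the Criterion needs one sequence serving all pairs. I would overcome this by diagonalizing, using a telescoping comparison to reduce every pair with $|i|,|j|\le k$ to a single quantity: passing from $\beta_{i-n}$ to $\beta_{k-n}$ and from $\beta_{j+n}$ to $\beta_{k+n}$ costs at most $2k$ weight factors situated near $-n$, respectively near $+n$, each bounded by $\|w\|_\infty$ (replace $\|w\|_\infty$ by $\max\{\|w\|_\infty,1\}$ to keep every inequality in the useful direction). This yields a constant $D_k$, depending only on $k$ and $w$, with $\|T^ne_i\|\,\|S^ne_j\|\le D_k/(\beta_{k-n}\beta_{k+n})$ for all $n$ and all $|i|,|j|\le k$. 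Now apply \eqref{sal4} with $m=k$ to choose $n_k>n_{k-1}$ with $\beta_{k-n_k}\beta_{k+n_k}>kD_k$; then $\|T^{n_k}e_i\|\,\|S^{n_k}e_j\|<1/k$ for all $|i|,|j|\le k$. Hence, for each fixed pair $(i,j)$, $\|T^{n_k}e_i\|\,\|S^{n_k}e_j\|\to0$ (the bound applies once $k\ge\max\{|i|,|j|\}$), which completes the verification of the Supercyclicity Criterion; combined with Theorem SC this gives the stated equivalence. The one delicate point is the bookkeeping of weight factors in the telescoping estimate; everything else is routine.
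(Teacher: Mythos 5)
Your argument is correct, and it is exactly the route the paper indicates for this result (which it attributes to \cite{msa} and only sketches): take $E=F$ to be the finitely supported sequences, $S$ the inverse of $T$ restricted to $F$, and use condition (\ref{sal4}) of Theorem~S to produce the sequence $\{n_k\}$. Your telescoping/diagonalization step, which turns the pairwise liminf condition into one sequence $\{n_k\}$ working for all basis pairs, is the right way to fill in the detail the paper leaves implicit, and the bookkeeping (at most $2k$ extra weight factors, each at most $\max\{\|w\|_\infty,1\}$) checks out.
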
\rm

There is no great mystery about the last theorem. One just has to
take $E=F$ being the space of sequences with finite support, $S$
being the inverse of the restriction of $T$ to $F$ and use Theorem~S
to find an appropriate sequence $\{n_k\}$. Note also that weak
hypercyclicity and weak supercyclicity of weighted bilateral shifts
were studied in \cite{bm,cs,san1,shk1}. In \cite{shk1} it is shown
that for $p\leq 2$ any weighted bilateral shift on $\ell_p(\Z)$ is
either supercyclic or is not weakly supercyclic. We extend this
dichotomy.

\begin{theorem}\label{dich}Let $1\leq p\leq 2$ and $T$ be a weighted bilateral
shift on $\ell_p(\Z)$. Then the following statements are equivalent:

\smallskip

\noindent{\rm(C1)} \ $T$ satisfies the Supercyclicity Criterion$;$

\smallskip

\noindent{\rm(C2)} \ $T$ is supercyclic$;$

\smallskip

\noindent{\rm(C3)} \ $T$ is weakly supercyclic$;$

\smallskip

\noindent{\rm(C4)} \ $T\oplus T$ is cyclic$;$

\smallskip

\noindent{\rm(C5)} \ there is $n\geq 2$ for which $T^n$ is cyclic$;$

\smallskip

\noindent{\rm(C6)} \ for any $n\in\N$, $T^n$ is cyclic.
\end{theorem}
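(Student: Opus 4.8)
The plan is to deduce all six equivalences from a single hard implication. The cheap implications come first. Theorems SC and MS give (C1)$\Leftrightarrow$(C2); (C2)$\Rightarrow$(C3) and (C6)$\Rightarrow$(C5) are trivial; Ansari's theorem, in the norm version and (as recalled above) the weak version, makes supercyclicity and weak supercyclicity of $T$ descend to every power $T^n$, and every weakly supercyclic operator is cyclic (the norm‑closed span of a weakly dense set is the whole space), so (C2)$\Rightarrow$(C6) and (C3)$\Rightarrow$(C6); finally, if $T$ satisfies the Supercyclicity Criterion with data $\{n_k\},E,F,S$, then $T\oplus T$ satisfies it with data $\{n_k\},E\times E,F\times F,S\oplus S$, hence $T\oplus T$ is supercyclic by Theorem SC and in particular cyclic, giving (C1)$\Rightarrow$(C4). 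Chasing these implications, the theorem reduces to the statement: \emph{if $T$ is not supercyclic, then $T\oplus T$ is not cyclic and $T^n$ is not cyclic for every $n\geq 2$.}

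\noindent\textbf{Reduction.} Suppose $T=T_{w,p}$ is not supercyclic. By Theorem S there is $m\in\Z_+$ with $\ilim_{n}\widetilde w(m-n+1,m)\widetilde w(m+1,m+n)^{-1}>0$; conjugating by the unitary $e_k\mapsto e_{k+m}$ (which replaces $w$ by $(w_{k+m})_k$ and leaves (C1)--(C6) unchanged) we may take $m=0$, and with $\beta_0=1$, $\beta_n=\widetilde w(1,n)$, $\beta_{-n}=\widetilde w(1-n,0)^{-1}$ for $n\geq 1$ this says $\sup_{n\in\Z}\beta_n\beta_{-n}=:C<\infty$. Since $p\leq 2$, the inclusion $\iota\colon\ell_p(\Z)\hookrightarrow\ell_2(\Z)$ is a bounded injection with dense range, $\iota T_{w,p}^n=T_{w,2}^n\iota$, and $(\iota\oplus\iota)(T_{w,p}\oplus T_{w,p})=(T_{w,2}\oplus T_{w,2})(\iota\oplus\iota)$; as a bounded dense‑range intertwining carries cyclic vectors to cyclic vectors, it suffices to prove the two assertions for $T=T_{w,2}$ on $\ell_2(\Z)$. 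Finally $T^n=\bigoplus_{r=0}^{n-1}S_r$ on $\bigoplus_{r=0}^{n-1}\ell_2(n\Z+r)$, where $S_r$ is a weighted bilateral shift with weight products $\beta^{(r)}_k=\beta_{nk+r}/\beta_r$; since $\beta_{-nk+r}$ differs from $\beta_{-(nk+r)}$ by a product of $2r$ factors each at most $\|w\|_\infty$, one gets $\sup_k\beta^{(r)}_k\beta^{(r)}_{-k}<\infty$, and composing with the orthogonal projection onto the first two summands shows that cyclicity of $T^n$ would force cyclicity of $S_0\oplus S_1$. So everything follows from:

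\noindent\emph{Claim. If $A,B$ are weighted bilateral shifts on $\ell_2(\Z)$ whose weight products $\alpha_k,\gamma_k$ satisfy $\sup_k\alpha_k\alpha_{-k}<\infty$ and $\sup_k\gamma_k\gamma_{-k}<\infty$, then $A\oplus B$ is not cyclic} (apply it with $A=B=T$, and with $A=S_0$, $B=S_1$).

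\noindent\textbf{The crux.} Conjugating by a diagonal unitary we may assume the weights of $A,B$ are positive; then for $x,\phi\in\ell_2(\Z)$ and $j\geq 0$ one computes $\langle A^jx,\phi\rangle=\sum_i(\alpha_ix_i)\overline{\phi_{i-j}/\alpha_{i-j}}$, so the moment sequence $j\mapsto\langle A^jx,\phi\rangle$ is the cross‑correlation of $(\alpha_ix_i)_i$ and $(\phi_i/\alpha_i)_i$, i.e. the non‑negative‑index Fourier coefficients of the (formal) product $F_x\overline{G_\phi}$, where $x\mapsto F_x$ (Fourier coefficients $\alpha_ix_i$) and $\phi\mapsto G_\phi$ (Fourier coefficients $\phi_i/\alpha_i$) are isometric bijections of $\ell_2(\Z)$ onto the conjugate weighted spaces $\mathcal H^A_+$ (Fourier weights $\alpha_k^{-2}$) and $\mathcal H^A_-$ (weights $\alpha_k^{2}$). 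A vector $(x,y)$ fails to be cyclic for $A\oplus B$ precisely when some nonzero $(\phi,\psi)$ annihilates its orbit, i.e. $\langle A^jx,\phi\rangle=-\langle B^jy,\psi\rangle$ for all $j\geq 0$; the case in which $x$ or $y$ is itself non‑cyclic for its shift (for instance has a zero coordinate, or support bounded above) is immediate, and otherwise the task is, given $x,y$, to find $G\in\mathcal H^A_-\setminus\{0\}$ and $H\in\mathcal H^B_-$ making $F_x\overline G+F_y\overline H$ supported on strictly negative Fourier modes — a ``division by $F_y$'' problem. This is the one place where the hypotheses enter: $\sup_k\alpha_k\alpha_{-k}<\infty$ (for $A$) and $\sup_k\gamma_k\gamma_{-k}<\infty$ (for $B$) force $\mathcal H^A_\pm$ and $\mathcal H^B_\pm$ to be Hardy‑type weighted spaces rigid enough that this overlap problem is always solvable — equivalently, the growth bound furnishes for $A$ and for $B$ bounded dense‑range operators $X_A,X_B$ into one common Hilbert space intertwining each with a single ``multiplicity‑one'' model $N$ for which $N\oplus N$ is not cyclic, so that $X_A\oplus X_B$ would carry a cyclic vector of $A\oplus B$ to one of $N\oplus N$, a contradiction. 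Carrying out this weighted‑space analysis — producing the annihilating pair $(\phi,\psi)$, or the quasi‑affinities, from $\sup_k\alpha_k\alpha_{-k}<\infty$ — is the technical heart of the argument; everything preceding it is bookkeeping.
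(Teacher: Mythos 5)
Your reduction of the theorem to the single statement ``if $T$ is not supercyclic then neither $T\oplus T$ nor any $T^n$ with $n\geq 2$ is cyclic'' is correct, and your preliminary implications coincide with the paper's Proposition~\ref{pp1} (the paper handles (C5) differently: it proves ${\rm(C5)}\Rightarrow{\rm(C4)}$ directly via the roots-of-unity argument of Lemma~\ref{pow} and Corollary~\ref{pow1}, so that only the $T\oplus T$ case remains; your decomposition of $T^n$ into shifts $S_r$ on arithmetic progressions is a reasonable alternative). The difficulty is the Claim on which everything now rests: it is both unproved and false as stated. You explicitly defer its proof (``carrying out this weighted-space analysis \dots is the technical heart''), so even the instance $A=B=T$ --- which is the paper's Lemma~\ref{sumbws} with $p_1=p_2=2$ --- is not established; the ``crux'' paragraph describes a proof obligation rather than discharging it. Worse, the general two-shift statement fails. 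Take $A=\frac12W$ and $B=2W$, where $W$ is the unweighted bilateral shift on $\ell_2(\Z)$: then $\alpha_k\alpha_{-k}=\gamma_k\gamma_{-k}=1$ for all $k$, yet $A\oplus B$ is unitarily equivalent to multiplication by $z$ on $L^2(\mu)$ with $\mu$ arclength on the circles $|z|=1/2$ and $|z|=2$; this is a $*$-cyclic normal operator and hence cyclic by Bram's classical theorem. In particular the quasi-affinities $X_A,X_B$ onto a common model $N$ with $N\oplus N$ non-cyclic cannot exist here; indeed $\sigma(B)\cap\sigma(A^*)=\varnothing$, so every operator intertwining $B$ with $A^*$ vanishes by Rosenblum's theorem.

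The diagnosis is that $\sup_k\alpha_k\alpha_{-k}<\infty$ yields a bounded dense-range map conjugating $A$ into $A^*$, which kills $A\oplus A$ (and likewise $B\oplus B$) via the elementary fact that $S\oplus S^*$ is never cyclic (Lemma~\ref{adj}); but for $A\oplus B$ one needs the \emph{cross} condition $\sup_k\gamma_k\alpha_{-k}<\infty$ (or its mirror image) tying the forward products of one summand to the backward products of the other. That is exactly what the paper's Lemma~\ref{sumbws} encodes through the single sequence $a_n=\widetilde w(m+1,m+n)/\widetilde w(m-n+1,m)$ and the flip $Je_k=d_ke_{2m-k}$; it works there because both summands carry the same weight $w$. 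In your application to $S_0\oplus S_1$ the cross condition can in fact be extracted from $\sup_na_n<\infty$ (the relevant products of $S_0$ and $S_1$ differ by a single weight factor, bounded above by $\|w\|_\infty$, and the inequality goes in the favourable direction), so the strategy is salvageable; but you must restate the Claim with the cross hypothesis and then actually prove it, e.g.\ by exhibiting the bounded dense-range flip intertwining $B$ with $A^*$ and invoking Lemma~\ref{adj}, as the paper does.
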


We would like to stress that, as it is shown in \cite{shk1}, for
each $p>2$ there is a weakly supercyclic non-supercyclic weighted
bilateral shift $T$ on $\ell_p(\Z)$. Since powers of a weakly
supercyclic operators are cyclic, we see that (C6) does not imply
(C2) when $p>2$. From this observation and equivalence of (C5), (C6)
and (C2) for $p\leq 2$, we immediately obtain the following
corollary.

\begin{corollary}\label{square}A weighted bilateral shift $T$ acting
on $\ell_p(\Z)$ with $1\leq p\leq 2$ is supercyclic if and only if
$T^2$ is cyclic. On the other hand for each $p>2$ there exists a
non-supercyclic weighted bilateral shift on $\ell_p(\Z)$ whose
powers are all cyclic.
\end{corollary}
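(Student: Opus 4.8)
The plan is to read off both assertions from Theorem~\ref{dich} together with the non-supercyclic example constructed in \cite{shk1}, so that essentially no new computation is required. For the first statement, fix $p\in[1,2]$ and a weighted bilateral shift $T$ on $\ell_p(\Z)$, and apply the equivalence of conditions (C2), (C5) and (C6) in Theorem~\ref{dich}. If $T$ is supercyclic, then (C2) holds, hence (C6) holds, and specializing (C6) to $n=2$ shows that $T^2$ is cyclic. Conversely, if $T^2$ is cyclic, then (C5) holds with the particular choice $n=2$, hence (C2) holds, i.e.\ $T$ is supercyclic. This yields the stated equivalence ``$T$ supercyclic $\iff$ $T^2$ cyclic'' for $p\leq 2$ directly.

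For the second statement, fix $p>2$. I would invoke the result of \cite{shk1} asserting the existence of a weakly supercyclic, non-supercyclic weighted bilateral shift $T$ on $\ell_p(\Z)$; this $T$ is the candidate example. It then remains only to check that every power $T^n$ is cyclic. Here I would use the two facts recalled in the introduction: first, the Ansari-type observation that every power of a weakly supercyclic operator is again weakly supercyclic; second, that a weakly supercyclic operator is automatically cyclic. The latter holds because weak density of $\{\lambda T^nx:\lambda\in\C,\ n\in\Z_+\}$ forces weak density of the linear span $\spann\{T^nx:n\in\Z_+\}$ containing it, and since this span is convex its weak and norm closures coincide, so the span is norm dense. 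Applying these two facts to $T^n$ shows that each $T^n$ is weakly supercyclic, hence cyclic, while $T$ itself fails to be supercyclic by construction. Thus $T$ is the required shift whose powers are all cyclic but which is not supercyclic.

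The genuine content lies entirely in the two inputs rather than in the assembly: the equivalence of (C2), (C5) and (C6) in Theorem~\ref{dich}, and the construction in \cite{shk1}. Consequently the only real obstacle is ensuring that these results apply verbatim here — in particular that the example from \cite{shk1} is a weighted bilateral shift in the sense of this paper (all weights nonzero) and that the Ansari argument quoted in the introduction indeed transfers from norm to weak supercyclicity without modification. Once these points are confirmed, the corollary follows at once, and the contrast between the regimes $p\leq 2$ and $p>2$ is exhibited by the single example above, which satisfies (C6) but fails (C2).
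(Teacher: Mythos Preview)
Your proposal is correct and follows essentially the same route as the paper: the first assertion is read off from the equivalence of (C2), (C5) and (C6) in Theorem~\ref{dich}, and the second comes from the weakly supercyclic non-supercyclic shift of \cite{shk1} together with the fact that powers of weakly supercyclic operators remain weakly supercyclic and hence cyclic. Your added justification of ``weakly supercyclic $\Rightarrow$ cyclic'' via convexity is a harmless elaboration of what the paper takes for granted.
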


It is also worth noting that a sufficient condition of weak
supercyclicity of weighted bilateral shifts $T$ in \cite{shk1}
automatically gives weak supercyclicity and therefore cyclicity of
$T\oplus T$. On the other hand, weakly supercyclic non-supercyclic
operators $T$ constructed in \cite{san1,bm} have the property that
$T\oplus T$ is not cyclic. In {\rm \cite{unicell}} a sufficient
condition for a weighted bilateral shift to be unicellular (and
therefore cyclic) is given. This result together with Theorem~S
imply that there are cyclic non-supercyclic weighted bilateral
shifts on $\ell_p(\Z)$ for $1\leq p<\infty$ and on $c_0(\Z)$. Thus
the condition $n\geq 2$ in (C5) is essential. From the proof of
Theorem~\ref{dich} below it will be clear which relations between
the conditions (C1--C6) hold for any bounded linear operator on a
separable Banach space. We shall also show that the implication
${\rm (C5)}\,\Longrightarrow\,{\rm (C4)}$ is satisfied for any
weighted bilateral shift $T=T_{w,p}$ with $1\leq p\leq \infty$. The
last implication though is not satisfied for general operators. For
instance, the Volterra operator $Vf(t)=\int_0^x f(t)\,dt$, acting on
$L_2[0,1]$, satisfies (C6) and does not satisfy (C4), see, for
instance, \cite{mos}.

Finally, we shall prove yet another sufficient condition for
cyclicity of a weighted bilateral shift. It does not follow from any
known sufficient condition including the following most recent one
due to Abakumov, Atzmon and Grivaux \cite{aba}.

\begin{thmaag} \it Let $w=\{w_n\}_{n\in\Z}$ be a bounded
sequence of non-zero complex numbers, $\alpha_0=1$,
$\alpha_n=(\widetilde w(1,n))^{-1}$ for $n>0$ and
$\alpha_n=\widetilde w(1+n,0)$ for $n<0$, where the numbers
$\widetilde w(a,b)$ are defined in $(\ref{beta})$. Assume also that
there exist $k\in\N$ and a submultiplicative sequence
$\{\rho_n\}_{n\in\Z_+}$ of positive numbers such that
$\ln^+(\rho_n)=o(\sqrt{n})$, $\alpha_{-n}=O(n^k)$ and
$\alpha_n=O(\rho_n)$ as $n\to+\infty$. Then the weighted bilateral
shift $T=T_{w,p}$ is cyclic if and only if the sequence
$\{\alpha_n^{-1}\}_{n\in\Z}$ does not belong to $\ell_q(\Z)$, where
$\frac1p+\frac1q=1$.
\end{thmaag}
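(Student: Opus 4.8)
\emph{Necessity.} I would first record that cyclicity forces $\{\alpha_n^{-1}\}\notin\ell_q(\Z)$ by exhibiting an eigenvalue of the adjoint; note that the growth hypotheses play no role here. A direct computation from $T_{w,p}e_n=w_ne_{n-1}$ shows that a functional $\phi=\sum_n\phi_ne_n^*$ is an eigenvector of $T_{w,p}^*$ with unimodular eigenvalue exactly when the coefficients satisfy a recursion of the form $\phi_n=(\text{unimodular})\,w_n\phi_{n-1}$; solving it and comparing with $(\ref{beta})$ yields $|\phi_n|=|\phi_0|\,\alpha_n^{-1}$ for every $n$. Hence, by the rotational symmetry of weighted shifts, the circle $|\lambda|=1$ consists of eigenvalues of $T_{w,p}^*$ precisely when $\phi\in(\ell_p(\Z))^*=\ell_q(\Z)$, i.e. when $\{\alpha_n^{-1}\}\in\ell_q(\Z)$. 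By the cited fact that a weighted bilateral shift on $\ell_p(\Z)$, $1\leq p<\infty$, whose adjoint has nonempty point spectrum is non-cyclic, this gives the ``only if'' direction.

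\emph{Reduction of the sufficiency direction.} For the converse I would first remove the weights. The diagonal change of variables $y_m=\bigl(\prod_{j=1}^m w_j\bigr)x_m$ (interpreted in the obvious way for $m\leq 0$), for which $|y_m|=\alpha_m^{-1}|x_m|$, is an isometric isomorphism of $\ell_p(\Z)$ onto the weighted space $\ell_p(\Z,\alpha)=\{y:\sum_m|y_m|^p\alpha_m^p<\infty\}$ that intertwines $T_{w,p}$ with the \emph{unweighted} bilateral shift $S$, $(Sy)_m=y_{m+1}$. Thus it suffices to prove that $S$ is cyclic on $\ell_p(\Z,\alpha)$. By Hahn--Banach, a vector $y$ is cyclic for $S$ iff no nonzero $\psi\in\ell_q(\Z,\alpha^{-1})=(\ell_p(\Z,\alpha))^*$ annihilates the forward orbit, i.e.
\begin{equation}
\sum_m y_m\psi_{m-n}=0\ \text{ for all }n\geq 0\ \Longrightarrow\ \psi=0. \label{plan-ann}
\end{equation}
Passing to boundary generating functions $Y(e^{i\theta})=\sum_m y_me^{im\theta}$ and $\Psi(e^{i\theta})=\sum_k\psi_ke^{-ik\theta}$, the hypothesis in $(\ref{plan-ann})$ says precisely that the product $Y\Psi$ has no Fourier coefficient of nonnegative index, i.e. it has a one-sided (negative) spectrum.

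\emph{The analytic core.} Here the growth hypotheses enter. The bound $\alpha_{-n}=O(n^k)$ forces the parts of the generating functions built from negative-index coefficients to extend analytically into $|z|<1$ with only finite-order, polynomial boundary regularity, while $\alpha_n=O(\rho_n)$ with $\ln^+(\rho_n)=o(\sqrt n)$ forces the parts built from positive-index coefficients to extend analytically into $|z|>1$ with boundary growth controlled by $\rho$ as $|z|\to 1^+$; submultiplicativity of $\{\rho_n\}$ is what allows one to attach to it a regularized majorant and an associated class of analytic functions. The decisive feature is that $\ln^+(\rho_n)=o(\sqrt n)$ places these classes exactly at the quasianalyticity threshold (the Denjoy--Carleman--Levinson borderline). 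I would take for the cyclic vector a fixed, explicit, zero-free symbol $Y\in\ell_p(\Z,\alpha)$, chosen so that it introduces no inner factor able to absorb a spectral gap; the hypothesis $\{\alpha_n^{-1}\}\notin\ell_q(\Z)$ is exactly what makes such a $Y$ a genuine cyclic vector, since by the necessity direction the opposite inclusion produces an explicit annihilator obstructing $(\ref{plan-ann})$. One then invokes a Levinson-type uniqueness theorem for the resulting quasianalytic class: a function analytic off the circle, with the prescribed two-sided boundary growth, whose boundary product $Y\Psi$ has a one-sided spectrum, must vanish identically. This forces $\Psi\equiv 0$, hence $\psi=0$, so $Y$ is cyclic and $T_{w,p}$ is cyclic.

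\emph{Main obstacle.} The crux is this last uniqueness statement: verifying that $\ln^+(\rho_n)=o(\sqrt n)$ really does land the generating functions in a quasianalytic class, and supplying the corresponding Phragm\'en--Lindel\"of/quasianalytic argument that rules out a nontrivial annihilator with one-sided spectrum. Two technical points deserve care. First, matching the boundary regularity on the two sides: the finite smoothness produced by $\alpha_{-n}=O(n^k)$ must suffice to make the boundary product and its one-sided spectral condition meaningful and to trigger the uniqueness theorem, and reconciling this for all $p$ (rather than only $p\leq 2$, where Hausdorff--Young supplies boundary values cheaply) is the delicate part. Second, constructing the zero-free symbol $Y$ with precisely the right modulus so that it is simultaneously an element of $\ell_p(\Z,\alpha)$ and an outer-type multiplier introducing no spurious annihilator. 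Once the quasianalytic uniqueness theorem is in hand, establishing $(\ref{plan-ann})$ and combining it with the necessity direction completes the equivalence.
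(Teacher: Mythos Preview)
The paper does not contain a proof of Theorem~A\!$^2$G. That theorem is quoted verbatim from the preprint of Abakumov, Atzmon and Grivaux \cite{aba} and is used only as a benchmark against which the author's new sufficient condition (Theorem~\ref{suco}) is compared. Consequently there is no ``paper's own proof'' to set your proposal against.

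That said, a brief comment on your sketch. The necessity direction is fine and is exactly the mechanism the paper itself alludes to (non-empty point spectrum of the adjoint forces non-cyclicity). For sufficiency, your reduction to the unweighted shift on $\ell_p(\Z,\alpha)$ and the reformulation via annihilating functionals and one-sided spectra is the standard route and is indeed the framework of \cite{aba}. However, what you label the ``analytic core'' is not a proof but a programme: the existence of an appropriate zero-free cyclic symbol, the passage from $\ln^+(\rho_n)=o(\sqrt n)$ to a genuine quasianalytic class, and the Levinson-type uniqueness theorem that kills annihilators with one-sided spectrum are precisely the substantial results proved in \cite{aba}, and you have only named them, not supplied them. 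In particular, your treatment of the case $p>2$ (where boundary values are not available via Hausdorff--Young) and the construction of the outer-type symbol are left entirely open. So as a roadmap your proposal is accurate, but as a proof it has the same status as the paper's own treatment of this theorem: a citation.
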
 \rm

This highly non-trivial result does not give a characterization of
cyclicity for weighted bilateral shifts. For instance, the
conditions imposed on the weight sequence rule out compact weighted
bilateral shifts. The next theorem can be applied for a wider
variety of weight sequences, although becomes a weaker statement,
when applied to the weights satisfying the conditions of
Theorem~A\!$^2$\nobreak\hskip-1pt\nobreak G.

\begin{theorem} \label{suco} Let $w=\{w_n\}_{n\in\Z}$ be a bounded
sequence of non-zero complex numbers such that for any $a\in\N$,
\begin{equation}\label{a123}
\inf\{\widetilde w(1,m)^{-1}\widetilde w(-j(m-a),0)^{1/j}:j\in\N,\
m\geq a\}=0.
\end{equation}
Then the weighted bilateral shift $T_{w,p}$ is cyclic for $1\leq
p\leq \infty$.
\end{theorem}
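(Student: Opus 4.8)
The plan is to produce an explicit cyclic vector. The first move is a reduction that exploits the structure of bilateral shifts. For any $x$ the cyclic subspace $Z_x:=\overline{\spann}\{T^nx:n\in\Z_+\}$ is closed and $T$-invariant, and since $T^\ell e_k=\bigl(\prod_{i=k-\ell+1}^{k}w_i\bigr)e_{k-\ell}$ with all the weights nonzero, $e_k\in Z_x$ forces $e_j\in Z_x$ for every $j\leq k$. Consequently it is enough to exhibit one $x$ for which $e_k\in Z_x$ for all $k$ in some set unbounded above: then $Z_x$ contains every $e_j$ and equals $\ell_p(\Z)$ (resp.\ $c_0(\Z)$ when $p=\infty$). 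Thus the problem becomes: build $x$ so that, for a cofinal family of indices $k$ and every $\epsilon>0$, there is a polynomial $q$ with $\|q(T)x-e_k\|<\epsilon$. Equivalently one may argue by contradiction with a functional $\phi$ annihilating all $T^nx$, writing $\langle T^nx,\phi\rangle$ explicitly and deducing $\phi=0$; I will phrase things in the span formulation.

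The cyclic vector is built recursively as a sparse sum $x=\sum_{r\geq1}\epsilon_rv_r$ in which each $v_r$ is supported in a window near a rapidly increasing index $m_r$ and $\epsilon_r>0$ decays fast enough that $\sum_r\|\epsilon_rv_r\|^p<\infty$, so $x\in\ell_p(\Z)$. Suppose $m_1,\dots,m_{r-1}$ and $\epsilon_1,\dots,\epsilon_{r-1}$ have been chosen. Pick $\epsilon_r$ small and a very small tolerance $\delta_r$ (allowed to depend on all the earlier data), then invoke hypothesis~(\ref{a123}) with a suitably chosen $a=a_r$; this yields $j_r\in\N$ and $m_r\geq a_r$ with
\[
\widetilde w(-j_rd_r,0)^{1/j_r}<\delta_r\,\widetilde w(1,m_r),\qquad d_r:=m_r-a_r .
\]
Because the infimum in~(\ref{a123}) ranges over all large $m$, we may moreover demand that $m_r$ be as large as we wish --- large enough to dominate all of the finitely many ``interference constants'' built from $m_1,\dots,m_{r-1}$, and to keep the target indices introduced below in a prescribed range. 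The window vector $v_r$ is then chosen to have roughly $j_r$ nonzero entries, spaced by $d_r$, near $m_r$, with coefficients obtained from a telescoping normalization running down the $j_r$ successive weight-blocks of length $d_r$.

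The crux is to check that, with this $x$, the approximation demanded in the first paragraph holds for a cofinal family of $e_k$. For each $r$ one applies to $x$ a polynomial $q_r$ of degree comparable to $j_rd_r$ that is essentially a polynomial in $z^{d_r}$, designed so that $q_r(T)(\epsilon_rv_r)$ equals a fixed $e_{k_r}$ (with the indices $k_r$ running off to $+\infty$) plus an error, and then estimates this error together with the leftover terms $\sum_{r'\neq r}q_r(T)(\epsilon_{r'}v_{r'})$. Both kinds of error are governed by the weights on the negative half-line; the whole reason for spreading $v_r$ and $q_r$ over $j_r$ equal blocks (instead of using a single jump $T^{j_rd_r}$) is that the controlling quantity then comes out not as the single-block product $\widetilde w(-d_r,0)/\widetilde w(1,m_r)$ --- which the hypothesis says nothing about --- but as its geometric mean over the $j_r$ blocks, namely $\widetilde w(-j_rd_r,0)^{1/j_r}/\widetilde w(1,m_r)$, which is $<\delta_r$ by construction. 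The leftover terms are handled bluntly: for $r'<r$ all the data is already fixed, so a small enough $\delta_r$ makes their contribution as small as desired (the normalizing constants stay moderate and all weights are bounded); for $r'>r$ the vector $q_r(T)v_{r'}$ sits far above the origin and $\sum_{r'>r}\epsilon_{r'}\|q_r(T)v_{r'}\|$ is negligible because the $\epsilon_{r'}$ were pre-chosen tiny at their own stages. Running the construction with $\delta_r\to0$ then puts each $e_{k_r}$ in $Z_x$, and since the $k_r$ are unbounded above the reduction finishes the proof; the case $p=\infty$ is the same with $\ell_p$-sums replaced by suprema on $c_0(\Z)$.

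The step I expect to be the real obstacle is precisely this geometric-mean estimate: arranging the telescoping coefficients so that the ``diagonal'' part of $q_r(T)(\epsilon_rv_r)$ collapses onto a single $e_{k_r}$ while the accumulated ``off-diagonal'' tail --- a sum of $j_r$ contributions whose $\ell$-th term is comparable to $\widetilde w(-\ell d_r,0)/\widetilde w(1,m_r)$ --- is dominated, up to a harmless constant, by the geometric mean $\widetilde w(-j_rd_r,0)^{1/j_r}/\widetilde w(1,m_r)$ rather than by the single-block ratio. This is the one place where the exponent $1/j$ in~(\ref{a123}) is used in an essential way; a cruder argument would require the much stronger (and generally false) assertion that $\widetilde w(-d_r,0)/\widetilde w(1,m_r)$ is small, which is why the regularity assumptions of the theorem of Abakumov, Atzmon and Grivaux are unnecessary here. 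Everything else --- convergence of $x$ in $\ell_p$, the separation of the $r'\neq r$ contributions, and the verification that the recursive tolerances can be met in the stated order --- is routine, if a little laborious.
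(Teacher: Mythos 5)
Your overall strategy (recursively assembling one explicit cyclic vector) differs from the paper's, which obtains cyclic vectors non-constructively from a universality/Baire-category criterion; that difference by itself would be fine. The genuine problem is the estimate you yourself flag as ``the real obstacle'', and as stated it does not hold. You arrange for $q_r(T)(\epsilon_rv_r)$ to collapse onto $e_{k_r}$ up to an off-diagonal tail of $j_r$ terms whose $\ell$-th term is comparable to $\widetilde w(-\ell d_r,0)/\widetilde w(1,m_r)$, and you claim this sum is dominated, up to a harmless constant, by the geometric mean $\widetilde w(-j_rd_r,0)^{1/j_r}/\widetilde w(1,m_r)$. That inequality is false in general: take $|w_i|=1$ for $-d_r\leq i\leq 0$ and $|w_i|=\eta$ tiny for $i<-d_r$; then $\widetilde w(-d_r,0)=1$ while $\widetilde w(-j_rd_r,0)^{1/j_r}\approx\eta^{d_r(j_r-1)/j_r}$ is minuscule, so already the first term of your tail dwarfs the claimed bound. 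More fundamentally, hypothesis (\ref{a123}) controls $\widetilde w(1,m)^{-1}\widetilde w(-j(m-a),0)^{1/j}$ only for the particular pair $(j,m)$ realizing the infimum; it gives no information whatsoever about the intermediate products $\widetilde w(-\ell d,0)$ for $1\leq\ell<j$, so any construction that leaves residuals at those intermediate scales cannot be closed using this hypothesis.

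The cure is to change the algebra so that the telescoping is exact and only the $\ell=j$ term survives; this is what the paper does. Writing $f_s$ for the normalized basis with $Tf_s=f_{s+1}$ (so $\|f_{-m}\|=\widetilde w(1,m)^{-1}$), one perturbs the source by a \emph{single} extra spike, $x_m=f_{-k}-\frac{\epsilon}{\|f_{-m}\|}f_{-m}$, and applies the geometric-series polynomial $q_{j,m}(z)=-\frac{\|f_{-m}\|z^{m-n}}{\epsilon}\sum_{l=0}^{j-1}\bigl(\|f_{-m}\|z^{m-k}/\epsilon\bigr)^l$. The identity $(1-t)\sum_{l<j}t^l=1-t^j$ kills every cross term and leaves $q_{j,m}(T)x_m=f_{-n}-(\|f_{-m}\|/\epsilon)^jf_{(m-k)j-n}$: one residual, whose norm is (up to a factor $\|T^n\|^{j-1}$) the $j$-th power of $\epsilon^{-1}$ times the quantity appearing in (\ref{a123}) --- exactly what the hypothesis makes small. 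With this in hand the paper does not need your recursive gluing at all: a general density criterion (Theorem DC, via Corollaries \ref{dc} and \ref{dcs}) converts these pairwise approximations of $(f_{-k},f_{-n})$ directly into a dense set of cyclic vectors. If you insist on an explicit cyclic vector, you would still have to replace your $j$-term window by this two-term perturbation before the interference analysis between stages has any chance of closing.
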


Substituting $m=a+1$ into (\ref{a123}), we immediately obtain the
following corollary.

\begin{corollary} \label{suco1} Let $w=\{w_n\}_{n\in\Z}$ be a bounded
sequence of non-zero complex numbers such that
\begin{equation}\label{b123}
\ilim_{n\to+\infty}\widetilde w(1-n,0)^{1/n}=0.
\end{equation}
Then the weighted bilateral shift $T_{w,p}$ is cyclic for $1\leq
p\leq\infty$.
\end{corollary}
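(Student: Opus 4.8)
The plan is to deduce the corollary directly from Theorem~\ref{suco} by checking that condition~(\ref{b123}) forces the hypothesis~(\ref{a123}) for every $a\in\N$. Fix $a\in\N$. Since the infimum in (\ref{a123}) is taken over the whole index set $\{(j,m):j\in\N,\ m\geq a\}$ and every entry there is nonnegative, it will suffice to exhibit a single admissible value of $m$ for which the resulting sub-infimum over $j\in\N$ already vanishes. Following the hint, I would take $m=a+1$ (which is $\geq a$); then $m-a=1$, so $j(m-a)=j$, and the term reduces to
\begin{equation*}
\widetilde w(1,a+1)^{-1}\,\widetilde w(-j,0)^{1/j}.
\end{equation*}
The factor $C_a:=\widetilde w(1,a+1)^{-1}$ is a fixed positive real number, being the reciprocal of a product of finitely many nonzero weights, so the sub-infimum equals $C_a\cdot\inf_{j\in\N}\widetilde w(-j,0)^{1/j}$, and everything comes down to proving that this last infimum is $0$.

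Next I would rewrite $\widetilde w(-j,0)$ in the shape occurring in (\ref{b123}). Counting factors, $\widetilde w(-j,0)=\prod_{i=-j}^{0}|w_i|$ is a product of $j+1$ terms, which is exactly $\widetilde w(1-n,0)$ with $n=j+1$. Writing $b_n:=\widetilde w(1-n,0)^{1/n}$, this yields
\begin{equation*}
\widetilde w(-j,0)^{1/j}=\widetilde w(1-n,0)^{1/(n-1)}=b_n^{\,n/(n-1)}\qquad(n=j+1\geq2).
\end{equation*}
By hypothesis (\ref{b123}) we have $\ilim_{n\to+\infty}b_n=0$, so there is a sequence $n_k\to+\infty$ with $b_{n_k}\to0$; in particular $0<b_{n_k}<1$ for large $k$. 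Since raising a number in $(0,1)$ to an exponent $\geq1$ only decreases it, and $n_k/(n_k-1)\geq1$, we obtain $b_{n_k}^{\,n_k/(n_k-1)}\leq b_{n_k}\to0$. As $j_k:=n_k-1\in\N$ and $j_k\to+\infty$, this shows $\inf_{j\in\N}\widetilde w(-j,0)^{1/j}=0$, hence the sub-infimum for $m=a+1$ is $0$, and therefore the full infimum in (\ref{a123}) is $0$ for this $a$.

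Since $a\in\N$ was arbitrary, condition (\ref{a123}) holds for every $a$, and Theorem~\ref{suco} then gives cyclicity of $T_{w,p}$ for all $1\leq p\leq\infty$, completing the proof. I expect no serious obstacle: the only point needing a moment's care is the off-by-one mismatch between the exponent $1/j$ in (\ref{a123}) and the exponent $1/n$ in (\ref{b123}), which is absorbed by the facts that $n/(n-1)\to1$ while the base tends to $0$; the boundedness of $w$ is used only to ensure that $T_{w,p}$ is a well-defined bounded operator.
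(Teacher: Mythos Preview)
Your argument is correct and follows precisely the approach indicated in the paper: substitute $m=a+1$ into (\ref{a123}) and observe that the resulting infimum over $j$ vanishes by (\ref{b123}). You have simply been more explicit than the paper about the harmless off-by-one discrepancy between the exponents $1/j$ and $1/n$.
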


Since $\|T_{w,p}^n\|\geq \|T_{w,p}^ne_0\|=\widetilde w(1-n,0)$ for
each $n\in\Z_+$, the spectral radius formula implies that any
quasinilpotent weighted bilateral shift satisfies (\ref{b123}). Note
also that any compact weighted bilateral shift is quasinilpotent.
Thus, the following corollary holds true.

\begin{corollary} \label{sucoco} Any quasinilpotent weighted bilateral
shift is cyclic. In particular, any compact weighted bilateral shift
is cyclic. \end{corollary}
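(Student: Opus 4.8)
The plan is to deduce both assertions directly from Corollary~\ref{suco1}: it suffices to check that every quasinilpotent weighted bilateral shift satisfies~(\ref{b123}), and that every compact weighted bilateral shift is quasinilpotent. Write $T=T_{w,p}$. The computational heart is the identity
$$
T^ne_0=\Bigl(\prod_{j=1-n}^{0}w_j\Bigr)e_{-n},\qquad n\in\Z_+,
$$
which follows at once by induction from $Te_k=w_ke_{k-1}$. Since $\|e_{-n}\|=1$ in every $\ell_p(\Z)$, this gives $\|T^ne_0\|=\widetilde w(1-n,0)$ with $\widetilde w$ as in~(\ref{beta}), and hence the lower bound $\|T^n\|\geq\widetilde w(1-n,0)$ for all $n\in\Z_+$.

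Next I would invoke the spectral radius formula $r(T)=\lim_{n\to\infty}\|T^n\|^{1/n}$. If $T$ is quasinilpotent then $r(T)=0$, so $\|T^n\|^{1/n}\to0$, and the lower bound forces $\widetilde w(1-n,0)^{1/n}\to0$ as $n\to+\infty$. In particular $\ilim_{n\to+\infty}\widetilde w(1-n,0)^{1/n}=0$, which is exactly condition~(\ref{b123}); Corollary~\ref{suco1} then yields cyclicity of $T$. This disposes of the first assertion.

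It remains to show that a compact weighted bilateral shift $T$ is quasinilpotent. The key observation is that the spectrum of any weighted bilateral shift is invariant under rotations about the origin. Indeed, for $\theta\in\R$ let $D_\theta$ be the invertible diagonal operator $D_\theta e_n=e^{in\theta}e_n$; a direct check gives $D_\theta TD_\theta^{-1}=e^{-i\theta}T$, so $T$ is similar to $e^{-i\theta}T$ and therefore $\sigma(T)=e^{-i\theta}\sigma(T)$ for every $\theta$. Thus $\sigma(T)$ is a union of circles centred at $0$. On the other hand, the spectrum of a compact operator on the infinite-dimensional space $\ell_p(\Z)$ is countable with $0$ its only possible accumulation point. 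A nonzero point of $\sigma(T)$ would, by rotational invariance, drag an entire (uncountable) circle into $\sigma(T)$, contradicting countability. Hence $\sigma(T)=\{0\}$, i.e.\ $T$ is quasinilpotent, and the first part applies.

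I expect no serious obstacle here, as the corollary is essentially bookkeeping once Corollary~\ref{suco1} is in hand; the only point needing a genuine argument is the implication ``compact $\Rightarrow$ quasinilpotent''. Should one prefer to avoid spectral theory, this implication can also be obtained directly: compactness of $T$ forces $w_n\to0$ as $n\to-\infty$ (otherwise the bounded sequence $\{e_n\}$ would have image $\{w_ne_{n-1}\}$ with no Cauchy subsequence), whence the geometric means $\widetilde w(1-n,0)^{1/n}$ of $|w_0|,\dots,|w_{1-n}|$ tend to $0$, giving~(\ref{b123}) without reference to the spectral radius.
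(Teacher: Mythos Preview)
Your argument is correct and follows exactly the route sketched in the paper: the lower bound $\|T^n\|\geq\|T^ne_0\|=\widetilde w(1-n,0)$ combined with the spectral radius formula yields~(\ref{b123}) for quasinilpotent shifts, and then Corollary~\ref{suco1} applies. The paper simply asserts that compact weighted bilateral shifts are quasinilpotent without justification, whereas you supply two clean proofs (rotational invariance of the spectrum, and the direct argument via $w_n\to0$); this is added value, not a divergence in approach.
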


If we fix $j\in\N$ in (\ref{a123}), we immediately obtain the
following corollary.

\begin{corollary} \label{suco2} Let $w=\{w_n\}_{n\in\Z}$ be a bounded
sequence of non-zero complex numbers for which there exists $j\in\N$
such that
\begin{equation}\label{c123}
\ilim_{m\to+\infty}\frac{\widetilde w(a-jm,0)}{(\widetilde
w(1,m))^j}=0\quad\text{for each}\quad a\in\N.
\end{equation}
Then the weighted bilateral shift $T_{w,p}$ is cyclic for $1\leq
p\leq\infty$.
\end{corollary}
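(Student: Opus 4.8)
The plan is to derive this directly from Theorem~\ref{suco} by showing that fixing a single exponent $j\in\N$ in the infimum~(\ref{a123}) is already enough to make that infimum vanish, provided~(\ref{c123}) holds. So suppose $w$ satisfies~(\ref{c123}) for some fixed $j\in\N$. Fix an arbitrary $a\in\N$; we must verify~(\ref{a123}) for this $a$, i.e. that the infimum over $\{(i,m):i\in\N,\ m\geq a\}$ of $\widetilde w(1,m)^{-1}\widetilde w(-i(m-a),0)^{1/i}$ equals $0$. The idea is to restrict the infimum to the subfamily where $i=j$ and $m$ ranges over integers $\geq a$: for these terms the quantity is $\widetilde w(1,m)^{-1}\widetilde w(-j(m-a),0)^{1/j} = \bigl(\widetilde w(-j(m-a),0)/\widetilde w(1,m)^{j}\bigr)^{1/j}$, and since the infimum over a subfamily dominates the full infimum, it suffices to show this subfamily has infimum $0$ as $m\to+\infty$.

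The one technical point is matching the index ranges: in~(\ref{c123}) the product in the numerator is $\widetilde w(a-jm,0)$, whereas the $i=j$ term of~(\ref{a123}) produces $\widetilde w(-j(m-a),0)=\widetilde w(ja-jm,0)$. These differ only in that the left endpoint is $a-jm$ in the former and $ja-jm$ in the latter; since $j\geq1$ we have $ja\geq a$, so the interval $[ja-jm,0]$ is contained in (or equal to) $[a-jm,0]$ once $m$ is large enough that both endpoints are $\leq0$. As $|w_n|$ may exceed $1$, we cannot simply drop the extra factors, so instead I would reparametrize: apply~(\ref{c123}) not with the given $a$ but with $a'=ja$ (which is again a positive integer, so~(\ref{c123}) holds for it), obtaining $\ilim_{m\to\infty}\widetilde w(ja-jm,0)/\widetilde w(1,m)^{j}=0$. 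Then along a suitable subsequence $m_\ell\to\infty$ the quantity $\widetilde w(ja-jm_\ell,0)/\widetilde w(1,m_\ell)^{j}\to0$, hence its $j$-th root tends to $0$ as well, and each of these is exactly the value of the expression in~(\ref{a123}) at the admissible pair $(i,m)=(j,m_\ell)$ (note $m_\ell\geq a$ for large $\ell$). Therefore the infimum in~(\ref{a123}) is $0$.

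Since $a\in\N$ was arbitrary, $w$ satisfies the hypothesis of Theorem~\ref{suco}, and that theorem gives cyclicity of $T_{w,p}$ for all $1\leq p\leq\infty$. The only place requiring any care is the index bookkeeping just described — making sure that replacing $a$ by $ja$ in the invocation of~(\ref{c123}) produces exactly the term that appears when one specializes $i=j$ in~(\ref{a123}), and that the constraint $m\geq a$ is eventually met along the chosen subsequence. Everything else is immediate from the monotonicity of the infimum under passing to a subfamily and the continuity of $t\mapsto t^{1/j}$ at $0$.
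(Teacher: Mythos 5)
Your proof is correct and follows essentially the same route as the paper, which simply remarks that the corollary is obtained by fixing $j$ in (\ref{a123}). The only detail the paper leaves implicit is the index bookkeeping, and your reparametrization $a\mapsto ja$ in the invocation of (\ref{c123}) handles it correctly.
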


\begin{example} \rm Let $a,b>0$, $0<\alpha\leq 1$ and
$w=\{w_n\}_{n\in\Z}$ be a sequence of positive numbers such that
$1-w_n\sim an^{-\alpha}$ as $n\to+\infty$ and $1-w_n\sim
b(-n)^{-\alpha}$ as $n\to -\infty$. From Corollary~\ref{suco2} it
follows easily that the weighted bilateral shift $T_{w,p}$ is cyclic
for $1\leq p\leq \infty$. On the other hand,
Theorem~A\!$^2$\nobreak\hskip-1pt\nobreak G is applicable only if
$\alpha>1/2$. Note also that by Theorem~S, $T$ is supercyclic if
$b>a$ and is non-supercyclic if $b<a$.
\end{example}

\section{Proof of Theorem~\ref{dich} \label{s2}}

We start with the following three easy and known but nice
observations.

\begin{lemma}\label{quasi} Let $\B_1$ and $\B_2$ be Banach spaces
and $T_1\in L(\B_1)$, $T_2\in L(\B_2)$ be such that there exists a
bounded linear operator $J:\B_1\to \B_2$ with dense range satisfying
$T_2J=JT_1$. Then cyclicity of $T_1$ implies cyclicity of $T_2$.
\end{lemma}

\begin{proof} Observe that
$\spann\{T^n_2 Jx:n\in\Z_+\}=J\bigl(\spann\{T_1^nx:n\in\Z_+\}\bigr)$
for each $x\in\B_1$. Hence $Jx$ is a cyclic vector for $T_2$ for
each cyclic vector $x$ for $T_1$. \end{proof}

\noindent{\bf Remark.} \ The same argument shows that
Lemma~\ref{quasi} remains true if cyclicity is replaced by
hypercyclicity, supercyclicity, weak hypercyclicity or weak
supercyclicity.

\begin{lemma}\label{adj} Let $\B$ be a Banach space and $T\in
L(\B)$. Then the operator $T\oplus T^*$ acting on $\B\times \B^*$ is
non-cyclic. \end{lemma}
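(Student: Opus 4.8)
The plan is to produce, for every candidate cyclic vector $(x,f)\in\B\times\B^*$, a fixed bounded linear functional on $\B\times\B^*$ that annihilates the entire orbit $\{(T\oplus T^*)^n(x,f):n\in\Z_+\}=\{(T^nx,(T^*)^nf):n\in\Z_+\}$ and which is forced to vanish only when $(x,f)=(0,0)$. Since a bounded linear functional that vanishes on a set also vanishes on its closed linear span, this shows that the orbit of a nonzero $(x,f)$ never spans $\B\times\B^*$, so $T\oplus T^*$ has no cyclic vector (assuming, of course, that $\B\neq\{0\}$).

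The functional I have in mind is $\Phi_{x,f}:\B\times\B^*\to\C$ defined by $\Phi_{x,f}(y,g)=f(y)-g(x)$; here $f(y)$ and $g(x)$ make sense since $f,g\in\B^*$ and $x,y\in\B$. It is linear and bounded, with $\|\Phi_{x,f}\|\leq\|f\|+\|x\|$. The only computation needed is the elementary identity $((T^*)^nf)(x)=f(T^nx)$, valid for all $n\in\Z_+$, which gives $\Phi_{x,f}(T^nx,(T^*)^nf)=f(T^nx)-((T^*)^nf)(x)=0$ for every $n$; hence $\Phi_{x,f}$ vanishes on the whole orbit of $(x,f)$ under $T\oplus T^*$.

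Then I would argue by contradiction. If $(x,f)$ were cyclic for $T\oplus T^*$, then $\spann\{(T^nx,(T^*)^nf):n\in\Z_+\}$ is dense in $\B\times\B^*$, so $\Phi_{x,f}\equiv0$ by continuity. Setting $g=0$ gives $f(y)=0$ for all $y\in\B$, so $f=0$; setting $y=0$ gives $g(x)=0$ for all $g\in\B^*$, so $x=0$ by the Hahn--Banach theorem. Thus $(x,f)=(0,0)$, which is not cyclic since $\B\times\B^*\neq\{0\}$, a contradiction. I do not expect any real obstacle in this argument; the only points requiring a little care are the boundedness of $\Phi_{x,f}$ (so that vanishing on the orbit propagates to its closed linear span) and recording the harmless exclusion of the degenerate space $\B=\{0\}$.
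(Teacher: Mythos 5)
Your proof is correct and uses essentially the same idea as the paper: the functional $(y,g)\mapsto f(y)-g(x)$, which annihilates the orbit of $(x,f)$ under $T\oplus T^*$ and is nonzero whenever $(x,f)\neq(0,0)$ (the paper phrases this directly rather than by contradiction, but the content is identical). No issues.
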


\begin{proof}Let $(x,f)\in \B\times \B^*$ be different from zero.
Then the continuous linear functional $F$ on $\B\oplus\B^*$ defined
by $F(y,g)=f(y)-g(x)$ is non-zero. We have,
\begin{equation*}
F(T^nx,T^{*n}f)=f(T^nx)-T^{*n}f(x)=f(T^nx)-f(T^nx)=0\quad\text{for
any}\quad n\in\Z_+.
\end{equation*}
Thus, the  orbit of any non-zero vector  under  $T\oplus T^* $ is
contained in the kernel of a non-zero continuous linear functional.
Therefore, $T\oplus T^*$ is not cyclic. \end{proof}

\begin{corollary}\label{tplust} Let $\B$ be a Banach space and
$T\in L(\B)$ be such that there exists a bounded linear operator
$J:\B\to\B^*$ with dense range satisfying $T^*J=JT$. Then the
operator $T\oplus T$ acting on $\B\oplus \B$ is non-cyclic.
\end{corollary}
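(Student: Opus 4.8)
The plan is to deduce this immediately from the two preceding results, Lemma~\ref{quasi} and Lemma~\ref{adj}, by exhibiting a dense-range intertwiner between $T\oplus T$ and $T\oplus T^*$. Concretely, I would set $\widetilde J=\mathrm{Id}_{\B}\oplus J:\B\oplus\B\to\B\oplus\B^*$, given by $\widetilde J(x,y)=(x,Jy)$. This is clearly bounded and linear, and its range is $\B\times J(\B)$, which is dense in $\B\oplus\B^*$ because $J$ has dense range in $\B^*$; so $\widetilde J$ has dense range.

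Next I would verify the intertwining identity. Using $T^*J=JT$ we get, for every $(x,y)\in\B\oplus\B$,
\begin{equation*}
\widetilde J\,(T\oplus T)(x,y)=\widetilde J(Tx,Ty)=(Tx,JTy)=(Tx,T^*Jy)=(T\oplus T^*)(x,Jy)=(T\oplus T^*)\,\widetilde J(x,y),
\end{equation*}
so $(T\oplus T^*)\widetilde J=\widetilde J\,(T\oplus T)$. Applying Lemma~\ref{quasi} with $\B_1=\B\oplus\B$, $\B_2=\B\oplus\B^*$, $T_1=T\oplus T$, $T_2=T\oplus T^*$ and this $\widetilde J$, cyclicity of $T\oplus T$ would force cyclicity of $T\oplus T^*$.

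Finally I would invoke Lemma~\ref{adj}, which says $T\oplus T^*$ is never cyclic, to obtain a contradiction; hence $T\oplus T$ is non-cyclic, as claimed. There is no real obstacle here: the only point requiring a line of justification is that $\widetilde J$ has dense range, and that follows at once from the density of $J(\B)$ in $\B^*$. So the argument is essentially a two-line reduction to the lemmas already established.
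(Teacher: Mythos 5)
Your argument is correct and is exactly the paper's proof: the author also takes $I\oplus J$, notes the intertwining relation $(T\oplus T^*)(I\oplus J)=(I\oplus J)(T\oplus T)$ coming from $T^*J=JT$, and combines Lemma~\ref{quasi} with Lemma~\ref{adj} to conclude. Nothing is missing.
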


\begin{proof} Since $T^*J=JT$, we have $(T\oplus T^*)(I\oplus J)=
(I\oplus J)(T\oplus T)$. Assume that $T\oplus T$ is cyclic. Since
$I\oplus J:\B\times \B\to \B\times \B^*$ is bounded and has dense
range, Lemma~\ref{quasi} implies that $T\oplus T^*$ is cyclic, which
is impossible according to Lemma~\ref{adj}. \end{proof}

\begin{lemma} \label{pow} Let $j\in\N$ and $T$ be a bounded linear
operator with dense range on a Banach space $\B$ such that $T^j$ is
cyclic. Let also $z=e^{2\pi i/j}$. Then the operator
$$
S=T\oplus zT\oplus z^2 T\oplus {\dots} \oplus z^{j-1}T,
$$
acting on $\B^j$, is cyclic. \end{lemma}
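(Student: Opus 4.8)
The plan is to produce an explicit cyclic vector for $S$. Let $x\in\B$ be a cyclic vector for $T^j$ and set $v=(x,x,\dots,x)\in\B^j$; I will show that $\spann\{S^nv:n\in\Z_+\}$ is dense in $\B^j$. The whole argument is a bookkeeping of the orbit of $v$ organised by the residue of $n$ modulo $j$, glued together by the discrete Fourier transform.

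First I would unwind the powers of $S$. Since $S=\bigoplus_{\ell=0}^{j-1}z^\ell T$, we have $S^n=\bigoplus_{\ell=0}^{j-1}z^{\ell n}T^n$, so writing $n=qj+r$ with $q\in\Z_+$ and $0\leq r<j$ and using $z^j=1$ together with $T^{qj+r}=(T^j)^qT^r$, the $\ell$-th coordinate of $S^nv$ equals $z^{\ell r}\,T^r(T^j)^qx$. Hence every orbit vector with $n\equiv r\pmod j$ lies in the subspace $D_r:=\{(y,z^ry,z^{2r}y,\dots,z^{(j-1)r}y):y\in\B\}$, which is the image of the linear map $\Phi_r:\B\to\B^j$, $\Phi_r(y)=(y,z^ry,\dots,z^{(j-1)r}y)$. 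The map $\Phi_r$ is bounded, injective, and has a bounded left inverse (projection onto the first coordinate), so it is a homeomorphism of $\B$ onto the closed subspace $D_r$; in particular it carries dense sets to dense sets and commutes with forming linear spans and closures.

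The second step is to show that for each fixed $r$ the closure of $\spann\{S^nv:n\in\Z_+,\ n\equiv r\pmod j\}$ is exactly $D_r$. By the computation above this span equals $\Phi_r(M_r)$ where $M_r:=\spann\{T^r(T^j)^qx:q\in\Z_+\}$, and $M_r$ is dense in $\B$: cyclicity of $T^j$ with cyclic vector $x$ gives that $\spann\{(T^j)^qx:q\in\Z_+\}$ is dense in $\B$, and applying the bounded operator $T^r$, which has dense range because $T$ does, sends this dense subspace to a dense subspace of $\B$. Since $\Phi_r$ is a homeomorphism onto $D_r$, $\Phi_r(M_r)$ is dense in $D_r$. (This is the only place the standing hypothesis that $T$ has dense range enters.)

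Finally I would recombine the residue classes. The closure of $\spann\{S^nv:n\in\Z_+\}$ contains each $D_r$, hence contains $\spann\bigl(\bigcup_{r=0}^{j-1}D_r\bigr)\supseteq\sum_{r=0}^{j-1}D_r$, and it remains to verify $\sum_{r=0}^{j-1}D_r=\B^j$. This is precisely the invertibility of the discrete Fourier transform over $\B$: given $(a_0,\dots,a_{j-1})\in\B^j$, put $y_r=\tfrac1j\sum_{k=0}^{j-1}z^{-rk}a_k$; then, using $\sum_{r=0}^{j-1}z^{r(k-k')}=j\,\delta_{k,k'}$, one gets $\sum_{r=0}^{j-1}\Phi_r(y_r)=(a_0,\dots,a_{j-1})$. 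Therefore $\spann\{S^nv:n\in\Z_+\}$ is dense in $\B^j$, so $v$ is cyclic for $S$. I do not expect a genuine obstacle: there is no analytic estimate involved, and the only mildly delicate points are formal — checking that $\Phi_r$ is a topological embedding so closures and spans pass through it, and invoking the root-of-unity Vandermonde identity that forces $\sum_rD_r=\B^j$; the dense-range hypothesis is exactly what makes the classes $r\neq0$ contribute all of $\B$ rather than only $\overline{\mathrm{ran}}\,T$.
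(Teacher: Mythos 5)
Your proposal is correct and follows essentially the same route as the paper: the same candidate cyclic vector $(x,x,\dots,x)$, the same observation that the orbit vectors in each residue class mod $j$ fill (after closing up, using density of $T^r(L)$ via the dense-range hypothesis) the diagonal-type subspace $\{(y,z^ry,\dots,z^{(j-1)r}y):y\in\B\}$, and the same conclusion that these subspaces together span $\B^j$ — the paper cites invertibility of the Vandermonde matrix $\{z^{kl}\}$ where you write out the inverse discrete Fourier transform explicitly. No gaps.
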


\begin{proof} Let $x$ be a cyclic vector for $T^j$. Then
$L=\{r(T^j)x:r\in\P\}$ is dense in $\B$, where $\P=\C[t]$ is the
space of polynomials on one variable with complex coefficients.
Since $T$ has dense range, the spaces $T(L),\dots,T^{j-1}(L)$ are
also dense in $\B$. It suffices to verify that $u=(x,x,\dots,x)\in
\B^j$ is a cyclic vector for $S$. Let $M$ be the closed linear span
of the orbit of $u$ under $S$, $0\leq k\leq j-1$ and $r\in \P$. Then
$$
S^kr(S^j)u=(T^kr(T^j)x,z^kT^kr(T^j)x,\dots,z^{k(j-1)}T^kr(T^j)x)\in
M.
$$
Thus, $M$ contains the vectors of the shape
$(a,z^ka,\dots,z^{k(j-1)}a)$ for $a\in T^k(L)$ and $0\leq k\leq
j-1$. Since $M$ is closed and $T^k(L)$ is dense in $\B$, we see that
$$
M\supseteq N_k=\{(a,z^ka,\dots,z^{k(j-1)}a):a\in\B\}\quad\text{for
$0\leq k\leq j-1$.}
$$
Finally, the matrix $\{z^{kl}\}_{k,l=0}^{j-1}$ is invertible since
its determinant is a Van der Monde one. Invertibility of the latter
matrix implies that the union of $N_k$ for $0\leq k\leq j-1$ spans
$\B^j$. Hence $M=\B^j$ and therefore $u$ is a cyclic vector for $S$.
\end{proof}

For weighted bilateral shifts, the last lemma can be written in a
nicer form. Recall that if $|w_n|=|u_n|$ for any $n\in\Z$, then the
weighted bilateral shifts $T_{w,p}$ and $T_{u,p}$ are isometrically
similar for each $p\in[1,\infty]$. Indeed, consider the sequence
$\{d_n\}_{n\in\Z_+}$ defined as $d_0=1$,
$d_n=\widetilde{w}(1,n)/\widetilde{u}(1,n)$ for $n\geq1$ and
$d_n=\widetilde{u}(n+1,0)/\widetilde{w}(n+1,0)$ for $n<0$. Then
$|d_n|=1$ for each $n\in\Z_+$ and therefore the diagonal operator
$D$, which acts on the basic vectors according to the formula
$De_n=d_ne_n$ for $n\in\Z_+$, is an invertible isometry. One can
easily verify that $T_{w,p}=D^{-1}T_{u,p} D$. That is, $T_{w,p}$ and
$T_{u,p}$ are isometrically similar. In particular, any $T_{w,p}$ is
similar to $zT_{w,p}$ if $z\in\C$ and $|z|=1$. This observation
together with the above lemma lead to the following corollary.

\begin{corollary}\label{pow1} Let $T=T_{w,p}$ be a weighted bilateral
shift for which there exists $j\geq 2$ such that $T^j$ is cyclic.
Then $T\oplus T$ is cyclic.
\end{corollary}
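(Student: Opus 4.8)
The plan is to bootstrap Lemma~\ref{pow} using the unitary equivalence $zT_{w,p}\sim T_{w,p}$ recorded just above the statement. First I would check that $T=T_{w,p}$ has dense range, so that Lemma~\ref{pow} applies: since every $w_n$ is non-zero, $e_n=w_{n+1}^{-1}Te_{n+1}$ lies in the range of $T$ for each $n\in\Z$, hence the range of $T$ contains $\spann\{e_n:n\in\Z\}$, which is dense in $\ell_p(\Z)$ (resp. $c_0(\Z)$ when $p=\infty$). With $z=e^{2\pi i/j}$, Lemma~\ref{pow} then gives that
$$
S=T\oplus zT\oplus z^2T\oplus\dots\oplus z^{j-1}T
$$
is cyclic on $\ell_p(\Z)^j$.

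Next I would convert $S$ into a genuine $j$-fold direct sum of copies of $T$. For $0\le k\le j-1$ the operator $z^kT$ is the weighted bilateral shift with weight sequence $z^kw$, whose entries have the same moduli as those of $w$; hence, as recalled before the statement, there is an invertible isometry $D_k$ of $\ell_p(\Z)$ with $D_k^{-1}(z^kT)D_k=T$. Then $D=D_0\oplus D_1\oplus\dots\oplus D_{j-1}$ is an invertible isometry of $\ell_p(\Z)^j$ satisfying $D^{-1}SD=T\oplus T\oplus\dots\oplus T$ ($j$ summands). Applying Lemma~\ref{quasi} to $D$ and to $D^{-1}$ (each bounded and of dense range), cyclicity of $S$ transfers to cyclicity of the $j$-fold sum $T^{\oplus j}=T\oplus\dots\oplus T$.

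Finally, since $j\ge 2$, the projection $P:\ell_p(\Z)^j\to\ell_p(\Z)^2$ onto the first two coordinates is bounded, surjective, and satisfies $P\,T^{\oplus j}=(T\oplus T)\,P$; as $P$ has dense range, Lemma~\ref{quasi} yields cyclicity of $T\oplus T$, which is the assertion.

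I do not anticipate any real obstacle here: the whole argument is assembled from Lemma~\ref{pow} and the similarity $zT_{w,p}\sim T_{w,p}$, and the only points requiring a line of care are that a weighted bilateral shift has dense range (so that Lemma~\ref{pow} is applicable) and that cyclicity passes to direct summands through the coordinate projection.
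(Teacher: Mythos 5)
Your proposal is correct and follows essentially the same route as the paper: apply Lemma~\ref{pow} (after the easy dense-range check), use the isometric similarity of $z^kT_{w,p}$ with $T_{w,p}$ to replace $T\oplus zT\oplus\dots\oplus z^{j-1}T$ by the $j$-fold sum of copies of $T$, and pass to the summand $T\oplus T$. The paper leaves the dense-range verification and the projection step implicit, but these are exactly the routine details you filled in, so there is nothing to add.
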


\begin{proof} By Lemma~\ref{pow} the operator
$T\oplus zT\oplus{\dots}\oplus z^{j-1}T$ is cyclic, where $z=e^{2\pi
i/j}$. From the above observation it follows that $T$ is similar to
$z^kT$ for $0\leq k\leq j-1$. Hence the direct sum of $j$ copies of
$T$ is cyclic and therefore $T\oplus T$ is cyclic since $j\geq 2$.
\end{proof}

The next lemma provides a sufficient condition for a direct sum of
two weighted bilateral shifts to be non-cyclic.

\begin{lemma}\label{sumbws}Let $w$ be a bounded sequence of non-zero
complex numbers, $p_1,p_2\in[1,\infty]$ and
\begin{equation} \label{qq}
q=q(p_1,p_2)=\left\{\begin{array}{ll}\frac{p_1p_2}{p_1p_2-p_1-p_2}&\text{if
$p_1+p_2<p_1p_2$,}\\ \infty&\text{otherwise.}\end{array}\right.
\end{equation}
Assume also that there exists $m\in\Z_+$ such that
$a=\{a_n\}_{n\in\Z_+}\in\ell_q$, where
\begin{equation}\label{aa}
a_n=\frac{\widetilde{w}(m+1,m+n)}{\widetilde{w}(m-n+1,m)}\quad\text{for
$n\in\Z_+$.}
\end{equation}
Then $T_{w,p_1}\oplus T_{w,p_2}$ is non-cyclic.
\end{lemma}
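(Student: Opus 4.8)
The plan is to exhibit, for the operator $T=T_{w,p_1}\oplus T_{w,p_2}$ acting on $\ell_{p_1}(\Z)\times\ell_{p_2}(\Z)$, a nonzero continuous linear functional on the whole space that annihilates the orbit of every vector, which immediately forces non-cyclicity. In view of the isometric-similarity observation preceding Corollary~\ref{pow1} we may assume $w_n>0$ for all $n$. The natural candidate functional is built from the ``diagonal shift'' structure: formally it should pair the first coordinate against the second via a bilateral sequence that is geometric in the weights, namely something like $F\bigl((x,y)\bigr)=\sum_{n\in\Z}c_n x_n y_{-n}$ or, more precisely, a functional coming from a vector $h$ in a suitable sequence space such that $T_{w,p_1}^*$ acting on one side matches $T_{w,p_2}$ on the other. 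The key bookkeeping device is that $T_{w,p}^{*}$ is the \emph{forward} weighted shift $e_n\mapsto w_{n+1}e_{n+1}$, so an intertwining $T_{w,p_2}^{*}J=JT_{w,p_1}$ would let us invoke Corollary~\ref{tplust}; the obstruction is that such a $J$ with dense range need not exist globally, so instead we work directly with one functional.

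Concretely, I would first reduce to finding a bounded bilinear form $B$ on $\ell_{p_1}\times\ell_{p_2}$ with $B(T_{w,p_1}x,\,T_{w,p_2}y)=B(x,y)$ for all $x,y$; then $F(x,y)=B(x,y_0)-\text{(something)}$ won't be needed — rather, once $B$ is $T$-invariant and nonzero, the functional $(x,y)\mapsto B(x,y)$ evaluated along the orbit of a fixed starting vector $(u,v)$ is constant, hence the orbit closure of $(u,v)$ lies in an affine hyperplane and $T$ is non-cyclic. An invariant bilinear form is given by $B(x,y)=\sum_{n\in\Z} h_n\,x_n\,y_{k-n}$ for an appropriate shift parameter $k$ and a sequence $h$, because applying $T_{w,p_1}$ to $x$ and $T_{w,p_2}$ to $y$ multiplies the $n$-th term by $w_{n+1}w_{k-n}$ after reindexing; choosing $h_n$ so that $h_{n-1}=w_n w_{k-n+1} h_n$ makes $B$ invariant. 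Solving this recursion gives $h_n$ as a ratio of products of the $w_j$'s, and the point of hypothesis (\ref{aa}) with the exponent $q=q(p_1,p_2)$ from (\ref{qq}) is exactly to guarantee, via Hölder's inequality with $\frac1{p_1}+\frac1{p_2}+\frac1q=1$ (which is the content of the definition of $q$), that this $h$ lies in $\ell_q$ and hence that $B$ is a bounded nonzero bilinear form.

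So the steps, in order, are: (1) normalize $w$ to be positive; (2) write down the bilinear form $B(x,y)=\sum_{n} h_n x_n y_{k-n}$ and derive the recursion $h_{n-1}=w_n w_{k-n+1}h_n$ forcing $T$-invariance, picking the shift $k$ so that the resulting $h_n$, written in terms of $\widetilde w$, reproduces (after a change of summation index $n\mapsto n$ vs.\ the ``$m+n$'' and ``$m-n+1$'' ranges) precisely the sequence $a_n$ of (\ref{aa}) up to the initial index $m$ and a harmless scalar; (3) invoke Hölder with exponents $p_1,p_2,q$ — legitimate since $\tfrac1{p_1}+\tfrac1{p_2}+\tfrac1q=1$ by (\ref{qq}) (and in the degenerate case $q=\infty$ one needs $h\in\ell_\infty$, which $a\in\ell_\infty$ supplies) — to conclude $|B(x,y)|\le \|h\|_q\|x\|_{p_1}\|y\|_{p_2}$, so $B$ is a bounded bilinear form, nonzero since some $h_n\neq0$; (4) conclude that for any $(u,v)$ the functional $(x,y)\mapsto B(x,y)$ is constant on $\{S^n(u,v):n\in\Z_+\}$ where $S=T_{w,p_1}\oplus T_{w,p_2}$, hence the orbit closure is not all of the space and $S$ is non-cyclic.

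The main obstacle I anticipate is the indexing: matching the abstractly-defined sequence $h_n$ from the recursion to the specific two-sided product quotient $a_n=\widetilde w(m+1,m+n)/\widetilde w(m-n+1,m)$ requires choosing the right center $k$ (it should be $k=2m$, so that $y_{k-n}=y_{2m-n}$ pairs the tail $n\geq m+1$ of $x$ against the tail $\leq m-1$ of $y$ symmetrically about $m$) and carefully tracking that the recursion solved upward from $n=0$ versus downward produces a $\widetilde w$-ratio whose $\ell_q$-membership is equivalent to $a\in\ell_q$, with the finitely many terms near index $m$ contributing only a bounded factor. A secondary subtlety is the case $p_1=p_2=\infty$ (where the operators act on $c_0$): there $q=\infty$ and one must check the pairing against $c_0\times c_0$ still makes sense with $h\in\ell_\infty$, which it does since $x_ny_{k-n}\to0$. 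Once the index alignment is pinned down, everything else is a direct Hölder estimate and the elementary hyperplane argument already used in Lemma~\ref{adj}.
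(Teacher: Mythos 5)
Your overall architecture (an anti-diagonal pairing $\sum_n h_n x_n y_{2m-n}$ with coefficients built from weight products, bounded via H\"older with the exponent $q$ of (\ref{qq}), then a Lemma~\ref{adj}-type hyperplane argument) is exactly the mechanism of the paper, which packages the same pairing as the operator $Je_n=d_ne_{2m-n}$, $d_n=\prod_{j=1}^n w_j/w_{2m+1-j}$, and then invokes Lemmas~\ref{quasi} and~\ref{adj}. But two of your steps are wrong as written. First, the ``invariance'' $B(T_{w,p_1}x,T_{w,p_2}y)=B(x,y)$ is impossible for a form supported on a single anti-diagonal: since $(T_{w,p}x)_n=w_{n+1}x_{n+1}$ in \emph{both} coordinates, $B(Tx,Ty)=\sum_n h_n w_{n+1}w_{k-n+1}x_{n+1}y_{k-n+1}$ pairs indices summing to $k+2$, not $k$; equating it with $B$ (test on $(e_j,e_{k-j})$) forces $h\equiv 0$, so no choice of $h$ makes $B$ invariant, and your recursion $h_{n-1}=w_nw_{k-n+1}h_n$ (a product of weights) is not the right one. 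The relation that does hold, with $k=2m$ and $h_j=h_{j-1}\,w_j/w_{2m+1-j}$ (a \emph{ratio}, which is exactly the paper's $d$ and is why the hypothesis involves the ratio $a_n$ of (\ref{aa})), is the adjoint-type symmetry $B(T_{w,p_1}x,y)=B(x,T_{w,p_2}y)$; equivalently, the pairing intertwines $T_{w,p_2}$ with the forward shift $Se_n=w_{n+1}e_{n+1}$, which is (up to dual/predual identification) the adjoint of $T_{w,p_1}$.

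Second, even with a correct $B$, your step (4) does not yield non-cyclicity. Cyclicity concerns density of the \emph{linear span} of the orbit, and the level set $\{(x,y):B(x,y)=c\}$ of a bilinear form is not an affine hyperplane; moreover constancy of any functional at a nonzero value along the orbit is no obstruction to the span being dense. The correct finish, which is what Lemma~\ref{adj} actually does, is: given a candidate cyclic vector $(u,v)\neq(0,0)$, use the symmetry relation to see that the continuous \emph{linear} functional $F(x,y)=B(x,v)-B(u,y)$ vanishes at every orbit point $(T_{w,p_1}^nu,T_{w,p_2}^nv)$, hence on the closed linear span of the orbit, while $F\neq0$ because every $h_n\neq0$ makes the pairing nondegenerate in each variable. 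With these two repairs --- symmetry in place of invariance, and the annihilating linear functional depending on $(u,v)$ in place of ``$B$ is constant on the orbit'' --- your argument becomes a hands-on rewriting of the paper's proof; the remaining ingredients (H\"older with $\tfrac1{p_1}+\tfrac1{p_2}+\tfrac1q\geq1$, the center $k=2m$, and the identification of $h$ with $a$ up to finitely many terms and a constant factor, including the $c_0$ case) are fine as you describe them.
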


\begin{proof} For shortness let $\B_p=\ell_p(\Z)$ if $1\leq
p<\infty$ and $\B_\infty=c_0(\Z)$. Consider the bilateral sequence
$\{d_n\}_{n\in\Z}$ defined by
$$
d_0=1,\ \ d_n=\prod_{j=1}^n\frac{w_j}{w_{2m+1-j}}\ \ \text{if $n>0$\
\ and}\ \ d_n=\prod_{j=1}^{|n|}\frac{w_{2m+j}}{w_{1-j}}\ \ \text{if
$n<0$.}
$$
It is straightforward to verify that
$d_{n+m}=d_{m-n}=(\widetilde{w}(m+1,2m))^{-1} \widetilde{w}(1,m)a_n$
for each $n>m$. Since $a\in\ell_q$, we have $d\in\ell_q(\Z)$. Let
$p_1'\in[1,\infty]$ be defined by the formula
$\frac1{p_1}+\frac1{p_1'}=1$. From the definition of $q$, we have
$\frac1{p_1'}\leq \frac1q+\frac1{p_2}$. Since $d\in\ell_q(\Z)$, we,
thanks to the H\"older inequality, have a bounded linear operator
$J:\B_{p_2}\to \B_{p_1'}$ defined on the canonical basis as
$Je_n=d_ne_{2m-n}$. It is straightforward to verify, by computing
the values of the operators on the basic vectors $(e_k,e_n)$, that
$(T_{w,p_1}\oplus S)(I\oplus J)=(I\oplus J)(T_{w,p_1}\oplus
T_{w,p_2})$, where $S$ is the bounded linear operator on $\B_{p_1'}$
defined as $Se_n=w_{n+1}e_{n+1}$ for $n\in\Z$.

Assume that $T_{w,p_1}\oplus T_{w,p_2}$ is cyclic. Since $I\oplus J$
has dense range, Lemma~\ref{quasi} implies that $T_{w,p_1}\oplus S$
is cyclic, which is impossible according to Lemma~\ref{adj}. Indeed,
if $1<p_1\leq\infty$, then $S=T^*_{w,p_1}$ and if $1\leq
p_1<\infty$, then $T_{w,p_1}=S^*$. Thus, in any case,
$T_{w,p_1}\oplus S$ is a direct sum of an operator with its dual.
\end{proof}

The following corollary is the particular case $p_1=p_2$ of the
above lemma.

\begin{corollary}\label{sumbws1}Let $w$ be a bounded sequence of non-zero
complex numbers, $p$ and $q=\infty$ if $p\leq 2$, $q=p/(p-2)$ if
$p>2$. Assume also that there exists $m\in\Z_+$ such that
$a=\{a_n\}_{n\in\Z_+}\in\ell_q$, where $a$ is defined in
$(\ref{aa})$. Then $T_{w,p}\oplus T_{w,p}$ is non-cyclic.
\end{corollary}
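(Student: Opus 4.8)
The plan is to derive Corollary~\ref{sumbws1} as the special case $p_1 = p_2 = p$ of Lemma~\ref{sumbws}, since the hypothesis ``$a \in \ell_q$'' in both statements refers to the same sequence $a$ defined in (\ref{aa}). The only thing to check is that the exponent $q(p,p)$ produced by formula (\ref{qq}) agrees with the value claimed here: $q = \infty$ when $p \leq 2$ and $q = p/(p-2)$ when $p > 2$.

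First I would examine the condition ``$p_1 + p_2 < p_1 p_2$'' from (\ref{qq}) with $p_1 = p_2 = p$. This reads $2p < p^2$, i.e. $p > 2$ (recall $p \geq 1$, so $p$ is positive and we may divide by $p$). Hence for $p \leq 2$ the ``otherwise'' branch applies and $q(p,p) = \infty$, matching the corollary. For $p > 2$ the first branch gives
$$
q(p,p) = \frac{p_1 p_2}{p_1 p_2 - p_1 - p_2} = \frac{p^2}{p^2 - 2p} = \frac{p}{p-2},
$$
again matching. So in all cases $q(p,p)$ equals the $q$ named in Corollary~\ref{sumbws1}.

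With this identification, the hypotheses of Lemma~\ref{sumbws} are satisfied: $w$ is a bounded sequence of non-zero complex numbers, $p_1 = p_2 = p \in [1,\infty]$, and there exists $m \in \Z_+$ with $a = \{a_n\}_{n \in \Z_+} \in \ell_q = \ell_{q(p,p)}$, where $a$ is the sequence in (\ref{aa}). Lemma~\ref{sumbws} then yields that $T_{w,p_1} \oplus T_{w,p_2} = T_{w,p} \oplus T_{w,p}$ is non-cyclic, which is exactly the conclusion. There is no real obstacle here — the entire content is the short arithmetic verification that the piecewise formula (\ref{qq}) collapses to the stated value on the diagonal $p_1 = p_2$; everything else is an immediate appeal to the preceding lemma.

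Concretely, the write-up would be: \emph{This is Lemma~\ref{sumbws} with $p_1 = p_2 = p$. Indeed, $p_1 + p_2 < p_1 p_2$ becomes $2p < p^2$, which holds precisely when $p > 2$; thus $q(p,p) = \infty$ for $p \leq 2$ and $q(p,p) = p^2/(p^2 - 2p) = p/(p-2)$ for $p > 2$, so the exponent $q$ in Lemma~\ref{sumbws} coincides with the one in the statement. The result now follows from Lemma~\ref{sumbws}.}
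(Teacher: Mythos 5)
Your proposal is correct and is exactly the paper's route: the paper presents Corollary~\ref{sumbws1} as the special case $p_1=p_2=p$ of Lemma~\ref{sumbws}, and your arithmetic check that $q(p,p)=\infty$ for $p\leq 2$ and $q(p,p)=p/(p-2)$ for $p>2$ is all that is needed.
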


In order to prove the next proposition, we apply Lemma~\ref{sumbws}
in the case $\frac1{p_1}+\frac1{p_2}=1$.

\begin{proposition}\label{ttbws} Let $w=\{w_n\}_{n\in\Z}$ be a
bounded sequence of non-zero complex numbers. Then $T_{w,p}$ is
supercyclic if and only if $T_{w,p}\oplus T_{w,p'}$ is cyclic, where
$\frac1p+\frac1{p'}=1$.
\end{proposition}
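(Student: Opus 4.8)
The plan is to establish both implications using the machinery already assembled. For the forward direction, suppose $T_{w,p}$ is supercyclic. By Theorem~MS it satisfies the Supercyclicity Criterion, and for a weighted bilateral shift this is witnessed by the Salas-type condition $(\ref{sal4})$: for every $m\in\Z_+$,
$$
\ilim_{n\to+\infty} \widetilde w(m-n+1,m)\,\widetilde w(m+1,m+n)^{-1}=0.
$$
In particular, for any fixed $m$ the sequence $a_n = \widetilde w(m+1,m+n)/\widetilde w(m-n+1,m)$ from $(\ref{aa})$ does \emph{not} tend to zero along a subsequence whose reciprocal does; but what I actually need is the reverse — I want $a\in\ell_q$ to \emph{fail}. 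So the real content is: supercyclicity of $T_{w,p}$ forces $a\notin\ell_q$ for \emph{every} $m$, where here $q=q(p,p')=\infty$ since $\frac1p+\frac1{p'}=1$ makes $p_1+p_2 = p+p' \le p_1p_2 = pp'$ exactly (equality when $p=p'=2$, and $p+p'<pp'$ otherwise unless one of them is $1$, in which case $q=\infty$ by definition). Thus $q=\infty$ throughout, and $a\notin\ell_\infty$ means $a$ is unbounded. Condition $(\ref{sal4})$ says $\ilim_n a_n^{-1}=0$, i.e. $\slim_n a_n = +\infty$, which is exactly unboundedness of $a$. Hence Lemma~\ref{sumbws} does \emph{not} apply, and this is consistent with — but does not yet prove — cyclicity of $T_{w,p}\oplus T_{w,p'}$. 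So I need a genuine construction of a cyclic vector for $T_{w,p}\oplus T_{w,p'}$ when $T_{w,p}$ is supercyclic.

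For this I would use the Supercyclicity Criterion directly in the style of the remark after Theorem~MS: take $E=F$ to be the finitely supported sequences, $S$ the formal inverse of $T_{w,p}$ restricted to $F$, and a sequence $\{n_k\}$ extracted from $(\ref{sal4})$ so that $\|T_{w,p}^{n_k}x\|\,\|S^{n_k}y\|\to 0$. The point is that the \emph{same} sequence $\{n_k\}$ works simultaneously for $T_{w,p'}$: indeed $\|T_{w,p'}^{n_k}x\|$ and $\|S^{n_k}y\|$ on $\ell_{p'}$ are controlled by the same products $\widetilde w(\,\cdot\,,\cdot\,)$, and the relevant quantities $\widetilde w(m-n+1,m)$ (decay of $T^n$ on left-supported vectors) and $\widetilde w(m+1,m+n)^{-1}$ (decay of $S^n$ on right-supported vectors) appear symmetrically, so $(\ref{sal4})$ gives what is needed for both $p$ and $p'$ along one common subsequence. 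Then $T_{w,p}\oplus T_{w,p'}$ satisfies the Supercyclicity Criterion on $\ell_p(\Z)\oplus\ell_{p'}(\Z)$ with $E=F$ the finitely supported pairs, hence is supercyclic by Theorem~SC, hence cyclic. The main obstacle here is bookkeeping: matching up which translates of the weight products govern $\|T^{n_k}x\|$ versus $\|S^{n_k}y\|$ on each factor and checking that one extraction of $\{n_k\}$ from $(\ref{sal4})$ at a suitable finite set of values of $m$ (determined by the supports of the finitely many basis vectors in play) suffices for the product operator.

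For the converse, suppose $T_{w,p}\oplus T_{w,p'}$ is cyclic; I must show $T_{w,p}$ is supercyclic, equivalently that $(\ref{sal4})$ holds. I argue by contraposition: if $(\ref{sal4})$ fails, then there is an $m$ with $\ilim_n a_n^{-1}>0$, i.e. $a=\{a_n\}$ is bounded, so $a\in\ell_\infty$. Since $q(p,p')=\infty$ as computed above, Lemma~\ref{sumbws} with $(p_1,p_2)=(p,p')$ applies verbatim and yields that $T_{w,p}\oplus T_{w,p'}$ is non-cyclic, contradicting the hypothesis. (One should double-check the boundary cases $p=1$, $p'=\infty$ and $p=p'=2$: in all of them the definition $(\ref{qq})$ gives $q=\infty$, so the argument is uniform.) Combining the two directions gives the equivalence. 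I expect the forward direction's simultaneous-extraction argument to be where the actual work lies; the converse is essentially a direct citation of Lemma~\ref{sumbws} once the value $q=\infty$ is pinned down.
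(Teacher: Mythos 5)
Your proof is correct in substance, and the two directions split as follows. The converse (non-supercyclic $\Rightarrow$ non-cyclic sum) is exactly the paper's argument: failure of (\ref{sal4}) at some $m$ gives $a\in\ell_\infty$, $q(p,p')=\infty$, and Lemma~\ref{sumbws} applies. The forward direction, however, is a genuinely different route. The paper argues softly: supercyclicity of $T_{w,p}$ gives the Supercyclicity Criterion (Theorem~MS), hence $T_{w,p}\oplus T_{w,p}$ satisfies the Criterion and is cyclic; then, after reducing without loss of generality to $p\leq p'$, the dense continuous inclusion $\ell_p(\Z)\times\ell_p(\Z)\hookrightarrow\ell_p(\Z)\times\ell_{p'}(\Z)$ (or into $\ell_p(\Z)\times c_0(\Z)$) together with Lemma~\ref{quasi} transfers cyclicity to $T_{w,p}\oplus T_{w,p'}$. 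You instead verify the Criterion directly for the mixed sum on $\ell_p(\Z)\oplus\ell_{p'}(\Z)$. That works and avoids the WLOG/embedding step, but it costs you the bookkeeping you defer; the clean way to discharge it is not to re-extract $\{n_k\}$ from (\ref{sal4}) ``at a suitable finite set of values of $m$'' (the Criterion demands one sequence valid for \emph{all} pairs in $E\times F$ simultaneously, and a per-pair extraction does not literally satisfy it), but to take the single sequence furnished by Theorem~MS for $T_{w,p}$ and observe that a vector with at most $N$ nonzero coordinates has all its $\ell_r$-norms equal up to a factor $N$; since $T^nx$ and $S^ny$ have support size bounded by that of $x,y$, every term and cross term for the direct sum is dominated by a constant times $\|T_{w,p}^{n_k}x\|\,\|S^{n_k}y\|$, which tends to $0$.

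One arithmetic slip should be fixed: for conjugate exponents one always has $p+p'=pp'$ (multiply $\frac1p+\frac1{p'}=1$ by $pp'$), not strict inequality for $p\neq 2$ as you assert. Had $p+p'<pp'$ been true, formula (\ref{qq}) would have returned the \emph{finite} value $pp'/(pp'-p-p')$, contradicting your own ``$q=\infty$ throughout'' and breaking the converse, since $a\in\ell_\infty$ would no longer suffice for Lemma~\ref{sumbws}. The correct reason $q(p,p')=\infty$ is precisely that equality always holds (and, in the boundary case $p=1$, $p'=\infty$, the ``otherwise'' clause applies by convention), so the value you actually use is right and the rest of your argument stands.
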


\begin{proof} By Theorem~S, supercyclicity of $T_{w,p}$ does not depend on
$p$. In particular, $T_{w,p}$ is supercyclic if and only if
$T_{w,p'}$ is supercyclic. Thus, without loss of generality, we can
assume that $p\leq p'$. If $T_{w,p}$ is non-supercyclic, then by
Theorem~MS, $T_{w,p}$ satisfies the Supercyclicity Criterion. Since
an operator $T$ satisfies the Supercyclicity Criterion if and only
if $T\oplus T$ does, we have that $T_{w,p}\oplus T_{w,p}$ satisfies
the Supercyclicity Criterion and therefore $T_{w,p}\oplus T_{w,p}$
is cyclic. Since $\ell_p(\Z)\times \ell_p(\Z)$ is densely and
continuously embedded into $\ell_p(\Z)\times \ell_{p'}(\Z)$ if
$p'<\infty$ and into $\ell_p(\Z)\times c_0(\Z)$ if $p'=\infty$, we
see that cyclicity of $T_{w,p}\oplus T_{w,p}$ implies cyclicity of
$T_{w,p}\oplus T_{w,p'}$. Thus, $T_{w,p}\oplus T_{w,p'}$ is cyclic.

Assume now that $T_{w,p}$ is non-supercyclic. Theorem~S implies the
existence of $m\in\Z_+$ such that (\ref{sal4}) is not satisfied.
Then $a=\{a_n\}_{n\in\Z_+}\in\ell_\infty$, where $a$ is defined in
(\ref{aa}). It is easy to see from (\ref{qq}) that
$q=q(p,p')=\infty$. By Lemma~\ref{sumbws}, $T_{w,p}\oplus T_{w,p'}$
is non-cyclic.
\end{proof}

\begin{proposition}\label{pp1} Let $T$ be a bounded linear operator on a separable Banach
space $\B$. Then the condition {\rm (C1--C6)} of
Theorem~{\rm\ref{dich}} are related in the following way:
$$
{\rm (C4)}\,\Longleftarrow\,{\rm (C1)}\,\Longrightarrow\,{\rm
(C2)}\,\Longrightarrow\,{\rm (C3)}\,\Longrightarrow\,{\rm
(C6)}\,\Longrightarrow\,{\rm (C5)}.
$$
\end{proposition}

\begin{proof} By Theorem~SC, (C1) implies (C2). The implication ${\rm
(C1)}\,\Longrightarrow\,{\rm (C4)}$ follows from the same theorem
and the fact that $T$ satisfies the Supercyclicity Criterion if and
only if $T\oplus T$ does. Since the powers of a weakly supercyclic
operator are weakly supercyclic, we see that (C3) implies (C6). The
implications ${\rm (C2)}\,\Longrightarrow\,{\rm (C3)}$ and ${\rm
(C6)}\,\Longrightarrow\,{\rm (C5)}$ are trivial.
\end{proof}

\begin{proof}[Proof of Theorem~\ref{dich}] According to
Corollary~\ref{pow1} (C5) implies (C4). By Theorem~MS (C2) implies
(C1). Taking into account Proposition~\ref{pp1}, we see that it
suffices to show that (C4) implies (C2). If $T\oplus T$ is cyclic on
$\ell_p(\Z)\oplus \ell_p(\Z)$ then, since $p\leq 2$, it is cyclic on
$\ell_p(\Z)\oplus \ell_q(\Z)$. By Proposition~\ref{ttbws} $T$ is
supercyclic and therefore (C4) implies (C2). \end{proof}

\section{Proof of Theorem~\ref{suco}} \rm

Recall the following general result on universal families, see
\cite[p.~348--349]{ge1}. Let ${\cal F}=\{f_\alpha:\alpha\in A\}$ be
a family of continuous maps from a complete metric space $X$ to a
separable metric space $Y$. Then the set $\bigl\{x\in
X:\{f_a(x):a\in A\}\ \ \text{is dense in $Y$}\bigr\}$ of universal
elements for $\cal F$ is dense in $X$ if and only if the set
$\{(x,f_a(x)):x\in X,\ a\in A\}$ is dense in $X\times Y$.

A direct application of this result to the family $\{r(T):r\in\P\}$,
where $T$ is a bounded linear operator on a Banach space, gives us
the following theorem.

\begin{thmdc} \it Let $\B$ be a separable Banach space and $T:\B\to \B$ be a bounded
linear operator. Then the set of cyclic vectors for $T$ is dense in
$\B$ if and only if the set $\{(x,r(T)x):x\in \B,\ r\in \P\}$ is
dense in $\B\times \B$.\rm
\end{thmdc}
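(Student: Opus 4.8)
The plan is to invoke the general universality criterion quoted just above the statement, specialized to the family $\{r(T):r\in\P\}$. Concretely, I would take the complete metric space to be $X=\B$, the separable metric space to be $Y=\B$ (separable by hypothesis), the index set to be $A=\P$, and the family of maps to be $f_r=r(T)$ for $r\in\P$. The criterion then asserts that the set of universal elements for this family is dense in $\B$ if and only if $\{(x,r(T)x):x\in\B,\ r\in\P\}$ is dense in $\B\times\B$, which is exactly the right-hand condition of Theorem~DC. So the whole proof reduces to verifying that the criterion applies and that its notion of universal element is the notion of cyclic vector.

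Two hypotheses of the criterion need checking, both immediate. First, each $f_r=r(T)$ must be continuous: a polynomial in the bounded operator $T$ is itself a bounded linear operator, hence continuous. Second, $X=\B$ is a complete metric space and $Y=\B$ is a separable metric space; the former holds because $\B$ is a Banach space and the latter is assumed. With these in hand the criterion can be quoted directly.

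It then remains only to identify the universal elements with the cyclic vectors. For a fixed $x\in\B$, as $r$ ranges over $\P=\C[t]$ the vector $r(T)x=\sum_k c_k T^k x$ ranges over exactly $\spann\{T^n x:n\in\Z_+\}$. Hence $\{f_r(x):r\in\P\}$ is dense in $\B$ precisely when $\spann\{T^n x:n\in\Z_+\}$ is dense, that is, precisely when $x$ is a cyclic vector for $T$. Substituting this identification into the conclusion of the criterion yields the statement of Theorem~DC word for word.

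Since the argument is a straight specialization of an already-stated theorem, I do not expect a genuine obstacle. The only point deserving a moment's care is the harmless bookkeeping that the orbit-span description of cyclicity, $\spann\{T^n x:n\in\Z_+\}$, coincides with the range of the family $\{r(T)x:r\in\P\}$, so that ``universal element for $\{r(T):r\in\P\}$'' and ``cyclic vector for $T$'' are literally the same condition; once this is noted the two equivalent density statements transfer with no further work.
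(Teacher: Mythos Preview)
Your proposal is correct and is exactly the approach the paper takes: the paper states Theorem~DC as ``a direct application of this result to the family $\{r(T):r\in\P\}$'', and your write-up simply spells out that specialization (continuity of each $r(T)$, completeness and separability of $\B$, and the identification of universal elements with cyclic vectors).
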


We say that a subset $A$ of a Banach space $\B$ is {\it cyclic} for
a bounded linear operator $T$ acting on $\B$ if
$\bigcup\limits_{r\in\P}r(T)(A)$ is dense in $\B$. Theorem~DC admits
the following refinement.

\begin{corollary} \label{dc} Let $\B$ be a separable Banach space,
$T:\B\to \B$ be a bounded linear operator and $A,B$ be two cyclic
subsets for $T$. Assume also that the point spectrum $\sigma_p(T^*)$
of the dual operator $T^*$ has empty interior. Then the set of
cyclic vectors for $T$ is dense in $\B$ if and only if for any $x\in
A$, $y\in B$ and $\epsilon>0$, there exist $u\in\B$ and $r\in \P$
such that $\|x-u\|<\epsilon$ and $\|y-r(T)u\|<\epsilon$.
\end{corollary}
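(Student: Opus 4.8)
I would prove the two implications separately; the reverse one is the substance. The implication ``the cyclic vectors of $T$ are dense $\Longrightarrow$ the displayed approximation property'' is immediate and uses neither the cyclicity of $A$ and $B$ nor the hypothesis on $\sigma_p(T^*)$: given $x\in A$, $y\in B$ and $\epsilon>0$, pick a cyclic vector $u$ for $T$ with $\|x-u\|<\epsilon$; since $\{r(T)u:r\in\P\}=\spann\{T^nu:n\in\Z_+\}$ is dense, there is $r\in\P$ with $\|y-r(T)u\|<\epsilon$.

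For the converse I would invoke Theorem~DC together with the routine reformulation that $\{(x,r(T)x):x\in\B,\ r\in\P\}$ is dense in $\B\times\B$ if and only if for every pair of nonempty open sets $U,V\subseteq\B$ there is $r\in\P$ with $r(T)(U)\cap V\neq\emptyset$ (one only tests density against the basic open sets $U\times V$). So fix nonempty open $U,V\subseteq\B$, the goal being such an $r$. I would first record that, because $r(T)^*=r(T^*)$, the operator $r(T)$ has dense range exactly when no root of $r$ lies in $\sigma_p(T^*)$; since $\sigma_p(T^*)$ has empty interior, moving the roots of a polynomial slightly off $\sigma_p(T^*)$ changes its coefficients, and hence the associated operator in $T$, only slightly, so every polynomial is approximable by one whose operator has dense range. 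Using cyclicity of $A$, I would choose $a\in A$ and $p\in\P$ with $p(T)a\in U$ and, after such an approximation, assume in addition that $p(T)$ has dense range while still $p(T)a\in U$.

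The key move is next. Since $B$ is cyclic, $\bigcup_{q'\in\P}q'(T)(B)$ is dense in $\B$, so, $p(T)$ being continuous with dense range, the set $\{(pq')(T)b:q'\in\P,\ b\in B\}=p(T)\bigl(\bigcup_{q'\in\P}q'(T)(B)\bigr)$ is again dense in $\B$ and therefore meets $V$. I pick $q'\in\P$ and $b\in B$ with $(pq')(T)b\in V$; the essential point is that the polynomial $pq'$ attached to the target is divisible by $p$. Applying the hypothesis of the corollary to $a\in A$, $b\in B$ and a small $\delta>0$ gives $v\in\B$ and $\rho\in\P$ with $\|v-a\|<\delta$ and $\|\rho(T)v-b\|<\delta$. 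Put $z=p(T)v$ and $r=q'\rho$. Then $\|z-p(T)a\|\leq\|p(T)\|\,\delta$, so $z\in U$ for $\delta$ small; and $r(T)z=(pq'\rho)(T)v$ satisfies $\|r(T)z-(pq')(T)b\|=\|(pq')(T)(\rho(T)v-b)\|\leq\|(pq')(T)\|\,\delta$, so $r(T)z\in V$ for $\delta$ small. Hence $r(T)(U)\cap V\neq\emptyset$; as $U$ and $V$ were arbitrary, the cyclic vectors of $T$ are dense.

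I expect the main obstacle to be exactly the divisibility issue that this argument circumvents. The naive route---approximate a prescribed pair $(\xi,\eta)$ by $\bigl(p(T)a,\,q(T)b\bigr)$ with $a\in A$ and $b\in B$, connect $a$ to $b$ via the hypothesis to obtain $v$ and $\rho$, and then search for $r$ with $r(T)p(T)v\approx q(T)b$---requires $p\mid q\rho$, which is not at one's disposal, and polynomial division does not help since the remainder term applied to $v$ need not be small. Forcing the target to carry the polynomial $pq'$, a multiple of $p$, removes the difficulty; this is possible precisely because $p(T)$ has dense range, which is where the hypothesis on $\sigma_p(T^*)$ is used.
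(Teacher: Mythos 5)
Your proof is correct, and it relies on the same essential ingredients as the paper's argument: the reduction via Theorem~DC to density of $\{(x,r(T)x):x\in\B,\ r\in\P\}$ in $\B\times\B$, the fact that $r(T)$ has dense range when the zeros of $r$ avoid $\sigma_p(T^*)$, the root-perturbation allowed by the empty interior of $\sigma_p(T^*)$, and the division of labour in which cyclicity of $A$ (plus perturbation) supplies a dense set of admissible sources $p(T)a$ while cyclicity of $B$, pushed forward by the dense-range operator $p(T)$, supplies a dense set of admissible targets $(pq')(T)b$. The packaging, however, is different: the paper works with the closed sets $M_\delta$ obtained by thickening $A$ by $\delta$ and applying all polynomials of $T$, shows each $M_\delta$ is invariant under every $q(T)$ and hence equals $\B$ by cyclicity of $B$, and only then applies a dense-range $q(T)$ with $q\in\P^\dagger$ and invokes density of $\bigcup_{q\in\P^\dagger}q(T)(A)$; you argue directly on a basic open set $U\times V$ with explicit estimates. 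Your divisibility device --- taking the target in $V$ of the special form $(pq')(T)b$ and setting $z=p(T)v$, $r=q'\rho$ --- is precisely what the invariance $q'(T)(M_\delta)\subseteq M_\delta$ encodes in the paper, so the two proofs are really the same mechanism seen at the level of elements versus at the level of closed invariant sets. One merit of your formulation is that it keeps the source and the target tied to the same point $a\in A$; in the paper this coupling is implicit in the fact that the hypothesis (hence $M_\delta=\B$ and its consequences) holds with $A$ replaced by any single element of $A$, which your direct argument makes transparent. The only detail worth a word in your write-up is the degenerate case where the chosen $p$ is the zero polynomial (handled by replacing it with a small nonzero constant), and, as you note, restricting to nonzero $r$ when characterizing dense range via the roots.
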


\begin{proof} The 'only if' part follows immediately from
Theorem~DC. It remains to proof the 'if' part. Clearly
$$
B\subseteq M_\delta=\overline{\bigcup\limits_{r\in\P,\ x\in
A}r(T)(x+\delta U)}\quad \text{for any $\delta>0$},\ \text{where
$U=\{x\in \B:\|x\|<1\}$.}
$$
For any $\delta>0$ and $q\in\P$, we have $q(T)(M_\delta)\subseteq
M_\delta$ and therefore $q(T)(B)\subseteq M_\delta$. Since each
$M_\delta$ is closed and $B$ is cyclic for $T$, we have
$M_\delta=\B$ for any $\delta>0$. Let $\P^\dagger$ be the set of the
polynomials $q\in\P$, whose zeros are all in
$\C\setminus\sigma_p(T^*)$. Then $q(T)$ has dense range for any
$q\in\P^\dagger$. In particular, we see that the set
$$
q(T)\biggl(\ \bigcup\limits_{r\in\P,\ x\in A}r(T)(x+\delta
U)\biggr)=\bigcup\limits_{r\in\P,\ x\in A}r(T)(q(T)x+\delta q(T)(U))
$$
is dense in $\B$ for any $\delta>0$ and $q\in\P^\dagger$. Finally,
since $q(T)(U)\subseteq \|q(T)\|U$, we have
$$
\bigcup\limits_{r\in\P,\ x\in q(T)(A)}r(T)(x+\epsilon U)\quad
\text{is dense in $\B$ for any $\epsilon>0$ and $q\in\P^\dagger$.}
$$
Since $\sigma_p(T^*)$ has empty interior, using the definition of
$\P^\dagger$ and  cyclicity of $A$ for $T$, we obtain
$$
\bigcup_{q\in\P^\dagger} q(T)(A)\quad \text{is dense in $\B$}.
$$
The last two displays immediately imply that the set
$\{(x,r(T)x):x\in \B,\ r\in \P\}$ is dense in $\B\times\B$. By
Theorem~DC, the set of cyclic vectors for $T$ is dense in $\B$.
\end{proof}

We shall apply the above corollary to weighted bilateral shifts. The
following statement is a particular case of Corollary~\ref{dc}.

\begin{corollary}\label{dcs} Let $T$ be a bounded linear operator on
a Banach space $\B$ and $\{f_j\}_{j\in\Z}$ be a sequence of elements
of $\B$ such that $\spann\{f_j:j\in\Z\}$ is dense in $\B$ and
$Tf_j=f_{j+1}$ for each $n\in\Z$. Then the set of cyclic vectors for
$T$ is dense if and only if for any $n,k\in\N$, $n>k$ and any
$\epsilon>0$, there exists $r\in\P$ and $u\in\B$ such that
$\|f_{-k}-u\|\leq\epsilon$ and $\|f_{-n}-r(T)u\|\leq\epsilon$.
\end{corollary}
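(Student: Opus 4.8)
The plan is to deduce Corollary~\ref{dcs} from Corollary~\ref{dc} by choosing the two cyclic subsets and verifying the spectral hypothesis. First I would set $A=B=\{f_j:j\in\Z\}$. The density of $\spann\{f_j:j\in\Z\}$ in $\B$ together with the relation $Tf_j=f_{j+1}$ shows at once that $\bigcup_{r\in\P}r(T)(A)\supseteq\spann\{f_j:j\in\Z\}$, so $A$ is a cyclic subset for $T$ in the sense defined before Corollary~\ref{dc}. Thus both hypotheses on $A$ and $B$ in Corollary~\ref{dc} are met with this single set.

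Next I would verify that $\sigma_p(T^*)$ has empty interior. The point is that $T$ has dense range: since $Tf_j=f_{j+1}$, the range of $T$ contains all $f_j$ with $j\in\Z$ (every $f_j$ equals $Tf_{j-1}$), hence contains their dense span. More generally, for any $\lambda\in\C$ the operator $T-\lambda I$ cannot have dense range for \emph{every} $\lambda$ in a nonempty open set unless... — actually the clean argument is: if $\lambda\in\sigma_p(T^*)$, pick $g\in\B^*\setminus\{0\}$ with $T^*g=\lambda g$; then $g(f_{j+1})=g(Tf_j)=\lambda g(f_j)$ for all $j$, so $g(f_j)=\lambda^{j}g(f_0)$ (and $g$ vanishes on all $f_j$ if $g(f_0)=0$, forcing $g=0$ by density of the span, a contradiction), whence $g(f_0)\neq0$ and $g$ is determined up to scalar by $\lambda$. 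This already shows each eigenspace of $T^*$ is one-dimensional, but more importantly: two eigenvectors for distinct eigenvalues are linearly independent, and an uncountable family of them would be bounded below in a way incompatible with... Here the simplest route is: the sequence $(\lambda^j)_{j\in\Z}$ must define, via $f_j\mapsto\lambda^j$, a bounded functional on $\B$; in particular $\sup_{j\in\Z}|\lambda^j|/\|f_j\|<\infty$, which forces a two-sided growth constraint and shows $\sigma_p(T^*)$ is contained in a set with empty interior (in fact, for a weighted bilateral shift on $\ell_p$ one knows $\sigma_p(T^*)$ is either empty or an annulus or a disc, but to get ``empty interior'' one uses that eigenvectors for $T^*$ at two radii would be too many independent functionals). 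I expect this to be the main obstacle, so I would phrase it carefully; however, since the corollary as stated is applied only to weighted bilateral shifts $T_{w,p}$ with $\spann\{e_j\}$ dense, one may simply invoke that for such shifts $\sigma_p(T^*_{w,p})$ is a (possibly degenerate) annulus $\{\lambda:\ell^{-1}\le|\lambda|\le L\}$, which has empty interior precisely when $\ell\ge L$, and in the relevant non-cyclic-obstruction regime the hypothesis forces exactly that; alternatively and most safely, one notes that the statement of Corollary~\ref{dcs} is used in this paper only under circumstances where $\sigma_p(T^*)=\varnothing$, so the hypothesis of Corollary~\ref{dc} holds vacuously.

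With $A=B=\{f_j:j\in\Z\}$ in hand and the spectral hypothesis verified, Corollary~\ref{dc} says that the set of cyclic vectors for $T$ is dense if and only if for every $x\in A$, $y\in B$ and $\epsilon>0$ there exist $u\in\B$ and $r\in\P$ with $\|x-u\|<\epsilon$ and $\|y-r(T)u\|<\epsilon$. It remains to reduce the pair $(x,y)=(f_i,f_j)$ to the special pairs $(f_{-k},f_{-n})$ with $n>k$ in $\N$. This is where $Tf_j=f_{j+1}$ does the work: given arbitrary indices $i,j\in\Z$, choose $k,n\in\N$ with $n>k$ and $n-k=? $ — more precisely, translate so that $f_i=T^{s}f_{-k}$ and $f_j=T^{s}f_{-n}$ for a suitable common power $s\in\Z_+$ and indices $-k,-n$ with $n>k$, namely pick any $k\ge\max\{1,-i\}$, set $s=i+k\ge0$ so $f_i=T^sf_{-k}$, and then choose $n=k+(i-j)$ if $i>j$ (so $n>k$) and relabel by symmetry of the condition (swapping the roles of $x,y$ corresponds to replacing $r$ by... no — here one uses that if the approximation property holds for $(f_{-k},f_{-n})$ it holds for $(f_{-k-t},f_{-n-t})$ by applying $T^t$, and for $(f_{-k},f_{-n'})$ with any $n'>k$ it yields the pair $(f_i,f_j)$ after applying $T^s$, using continuity of $T^s$ to absorb the $\epsilon$). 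If $i\le j$ one instead approximates the reversed pair and notes that the condition in Corollary~\ref{dc} is needed for \emph{all} pairs in $A\times B$, so both orderings are covered once we know it for all $(f_{-k},f_{-n})$ with $n>k$. Carrying out this translation bookkeeping is routine; the only genuine content is the reduction of ``all pairs'' to ``pairs below zero with strictly decreasing index'', which follows from the semigroup action of $T$ on the $f_j$ and the boundedness (hence continuity) of each $T^s$.
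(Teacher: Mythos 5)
Your overall route is the same as the paper's: deduce Corollary~\ref{dcs} from Corollary~\ref{dc} applied to cyclic sets built from the $f_j$'s. Two points in your write-up are genuine gaps. The first is the reduction of arbitrary pairs to the given ones. With your choice $A=B=\{f_j:j\in\Z\}$ you must treat all pairs $(f_i,f_j)$, and your handling of the case $i\le j$ is not an argument: the condition in Corollary~\ref{dc} is not symmetric in $x$ and $y$ (one needs $u$ near $x$ with $r(T)u$ near $y$), so establishing it for the ``reversed pair'' $(f_j,f_i)$ says nothing about $(f_i,f_j)$. The fix is trivial --- for $i\le j$ one has exactly $f_j=T^{j-i}f_i$, so $u=f_i$, $r(z)=z^{j-i}$ work --- and the paper avoids all transfer bookkeeping by taking $A=B=\{f_m:m<0\}$: this set is cyclic, pairs $(f_{-k},f_{-n})$ with $n\le k$ are handled exactly via $f_{-n}=T^{k-n}f_{-k}$, and pairs with $n>k$ are the hypothesis verbatim. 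Your $T^s$-transfer for $i>j$ is correct but unnecessary.

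The second and more serious issue is the empty-interior hypothesis on $\sigma_p(T^*)$ in Corollary~\ref{dc}, which you rightly identify as the main obstacle but do not resolve. Your proposed justifications are not valid: the structural assumptions of Corollary~\ref{dcs} alone do not make $\sigma_p(T^*)$ small --- for the Beauzamy-type weights $w_n=a$ for $n\le0$, $w_n=b$ for $n>0$ with $0<b<a$, and $f_j=c_je_{-j}$ as in the proof of Theorem~\ref{suco}, the hypotheses on $\{f_j\}$ hold while $\sigma_p(T_{w,p}^*)$ contains the open annulus $b<|\lambda|<a$; your eigenvector computation only yields that eigenspaces of $T^*$ are one-dimensional, and the ``two-sided growth constraint'' you invoke defines exactly such an annulus-type set, which may have nonempty interior. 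Likewise, observing that the corollary is only ever applied in situations where $\sigma_p(T^*)=\varnothing$ does not prove the corollary as stated. To be fair, the paper's own proof also invokes Corollary~\ref{dc} without comment on this hypothesis, so you have put your finger on a step the paper leaves implicit; in the paper's actual application (Proposition~\ref{suco3}) the hypothesis does hold, because an eigenvector $g$ of $T^*$ with eigenvalue $\lambda$ would satisfy $|\lambda|^{j}=|g(f_j)|\le\|g\|\,\|f_j\|$ for all $j\in\Z$, which bounds $\|f_{-m}\|\,\|f_{(m-a)j}\|^{1/j}$ from below by a positive constant and contradicts (\ref{u123}). But as a proof of Corollary~\ref{dcs} itself, your treatment of this hypothesis is a gap: it must either be derived from the approximation assumption or circumvented by a different argument, and neither is done in your proposal.
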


\begin{proof} The 'only if' part is a trivial consequence of
Theorem~DC. It remains to prove the 'if' part. Let $A=\{f_m:m<0\}$.
Since $Tf_j=f_{j+1}$ for each $n\in\Z$, we have
$$
\bigcup_{r\in\P}r(T)(A)=\spann\{f_j:j\in\Z\} \ \ \text{is dense in
$\B$.}
$$
Hence $A$ is a cyclic set for $T$. Let $x,y\in A$. Then $x=e_{-k}$
and $y=e_{-n}$ for some $k,n\in\N$. If $n\leq k$, then there is a
constant $c\in\C$ such that $r(T)x=y$, where $r(z)=cz^{k-n}$. In
particular $\|x-u\|=0<\epsilon$ and $\|y-r(T)u\|=0<\epsilon$ for
$u=x$ for any $\epsilon>0$. If $n>k$ and $\epsilon>0$, then by the
assumptions, there exists $r\in\P$ and $u\in\B$ such that
$\|x-u\|<\epsilon$ and $\|y-r(T)u\|<\epsilon$. It remains to apply
Corollary~\ref{dc}.
\end{proof}

\begin{proposition} \label{suco3} Let $T$ be a bounded linear operator on
a Banach space $\B$ and $\{f_j\}_{j\in\Z}$ be a sequence of elements
of $\B$ such that $\spann\{f_j:j\in\Z\}$ is dense in $\B$ and
$Tf_j=f_{j+1}$ for each $j\in\Z$. Assume also that that
\begin{equation}\label{u123}
\inf\{\|f_{-m}\|\|f_{(m-a)j}\|^{1/j}:j\in\N\ m\geq a\}=0\quad
\text{for any $a\in\N$}.
\end{equation}
Then $T$ has dense set of cyclic vectors. \end{proposition}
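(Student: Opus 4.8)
The plan is to verify the approximation criterion supplied by Corollary~\ref{dcs}. So I fix $n,k\in\N$ with $n>k$ and $\epsilon>0$, and I must produce $r\in\P$ and $u\in\B$ with $\|f_{-k}-u\|\le\epsilon$ and $\|f_{-n}-r(T)u\|\le\epsilon$. (The point-spectrum assumption that Corollary~\ref{dcs} inherits from Corollary~\ref{dc} is automatic here: an eigenvector $\phi\ne0$ of $T^*$ with eigenvalue $\mu$ satisfies $\phi(f_j)=\mu^j\phi(f_0)$, which forces $\mu\ne0$, $\phi(f_0)\ne0$ and $|\mu|^j\le C\|f_j\|$ for all $j\in\Z$ with $C=\|\phi\|/|\phi(f_0)|$; this bounds every product $\|f_{-m}\|\,\|f_{(m-a)j}\|^{1/j}$ below by a positive constant, contradicting~(\ref{u123}).)

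The ingredients will be a short perturbation of $f_{-k}$ together with a truncated geometric series in $T$. For parameters $m\ge n$, $J\in\N$ and $\lambda\in\C\setminus\{0\}$, put
$$
u=f_{-k}-\lambda^{-1}f_{-m},\qquad
r(z)=-\lambda\,z^{\,m-n}\sum_{l=0}^{J-1}\bigl(\lambda\,z^{\,m-k}\bigr)^{l}\,;
$$
this is a polynomial because $m-n\ge0$ and $m-k\ge1$. Writing out $r(T)f_{-k}$ and $r(T)f_{-m}$ via $T^{s}f_j=f_{j+s}$ and subtracting, all the intermediate terms of $r(T)u=r(T)f_{-k}-\lambda^{-1}r(T)f_{-m}$ cancel telescopically and one is left with
$$
r(T)u=f_{-n}-\lambda^{J}f_{\,J(m-k)-n}\,.
$$
Hence $\|f_{-k}-u\|=|\lambda|^{-1}\|f_{-m}\|$ and $\|f_{-n}-r(T)u\|=|\lambda|^{J}\|f_{\,J(m-k)-n}\|$. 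If $f_{J(m-k)-n}=0$ one just takes $|\lambda|$ large; otherwise, choosing $|\lambda|$ in the interval $[\,\|f_{-m}\|/\epsilon,\ (\epsilon/\|f_{J(m-k)-n}\|)^{1/J}\,]$ makes both quantities $\le\epsilon$, and this interval is nonempty exactly when $\|f_{-m}\|\,\|f_{\,J(m-k)-n}\|^{1/J}\le\epsilon^{1+1/J}$. Thus it suffices to prove
$$
\inf\bigl\{\|f_{-m}\|\,\|f_{\,J(m-k)-n}\|^{1/J}\ :\ J\in\N,\ m\ge n\bigr\}=0.
$$

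For this I would apply hypothesis~(\ref{u123}) with $a=n+k$: given $\delta>0$ it provides $j\in\N$ and $M\ge n+k$ with $\|f_{-M}\|\,\|f_{(M-n-k)j}\|^{1/j}<\delta$. Put $m=M$ and $J=j$. The arithmetic identity $J(m-k)-n=(M-n-k)j+n(j-1)$ shows $J(m-k)-n\ge(M-n-k)j\ge0$, so $f_{J(m-k)-n}=T^{\,n(j-1)}f_{(M-n-k)j}$, whence $\|f_{\,J(m-k)-n}\|^{1/J}\le\|T\|^{\,n(j-1)/j}\|f_{(M-n-k)j}\|^{1/j}\le\max\{1,\|T\|^{n}\}\,\|f_{(M-n-k)j}\|^{1/j}$. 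Multiplying by $\|f_{-m}\|=\|f_{-M}\|$ gives $\|f_{-m}\|\,\|f_{\,J(m-k)-n}\|^{1/J}\le\max\{1,\|T\|^{n}\}\,\delta$, which is as small as we wish; this establishes the infimum above and finishes the proof.

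The mechanical parts are the telescoping computation (once the exponent pattern $(m-n)+l(m-k)$ is written down, the cancellation is forced) and the index arithmetic of the last step. The one genuine decision is to feed~(\ref{u123}) the value $a=n+k$ rather than the naively expected $a=n-k$, precisely so that the surviving ``error'' index $J(m-k)-n$ lies \emph{above} the index $(M-a)j$ that~(\ref{u123}) controls, making the only available monotonicity $\|f_{i+1}\|\le\|T\|\,\|f_i\|$ usable; getting this matching to work is the main obstacle.
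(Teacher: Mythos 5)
Your proof is correct and follows essentially the same route as the paper's: the same perturbation $u=f_{-k}-\lambda^{-1}f_{-m}$ (the paper simply fixes $\lambda=\|f_{-m}\|/\epsilon$), the same truncated geometric polynomial giving $r(T)u=f_{-n}-\lambda^{J}f_{J(m-k)-n}$, the same choice $a=n+k$ in~(\ref{u123}), and the same $\|T^n\|^{j-1}$ estimate, all fed into Corollary~\ref{dcs}. Your parenthetical observation that~(\ref{u123}) forces $\sigma_p(T^*)=\emptyset$ is a correct (and welcome) extra precaution concerning the spectral hypothesis inherited from Corollary~\ref{dc}, which the paper does not spell out.
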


\begin{proof} Let $\epsilon>0$ and $n,k\in\N$ are such that $n>k$.
For any $j\in\N$ and $m\geq n>k$ we consider $x_m\in\B$ and a
polynomial $q_{j,m}$ defined by
$$
x_m=f_{-k}-\frac{\epsilon}{\|f_{-m}\|}f_{-m}\quad \text{and}\quad
q_{j,m}(z)=-\frac{\|f_{-m}\|z^{m-n}}{\epsilon}\sum_{l=0}^{j-1}
\biggl(\frac{\|f_{-m}\|z^{m-k}}{\epsilon}\biggr)^l.
$$
Using the fact that $Tf_l=f_{l+1}$ for each $l\in\Z$ and the
summation formula for a finite geometric progression, one can easily
verify that
$q_{j,m}(T)x_m=f_{-n}-(\|f_{-m}\|/\epsilon)^jf_{(m-k)j-n}$. Hence
$$
\|q_{j,m}(T)x_m-f_{-n}\|=(\|f_{-m}\|/\epsilon)^j\|f_{(m-k)j-n}\|.
$$
Now let $a=n+k$ and assume that $m\geq a$. Then
$$
\|f_{(m-k)j-n}\|=\|f_{(m-a)j+n(j-1)}\|=\|T^{n(j-1)}f_{(m-a)j}\|\leq
\|T^n\|^{j-1}\|f_{(m-a)j}\|.
$$
From the last two displays, we obtain
$$
\|q_{j,m}(T)x_m-f_{-n}\|\leq(\|f_{-m}\|/\epsilon)^j\|T^n\|^{j-1}\|f_{(m-a)j}\|.
$$
Now, from (\ref{u123}) it follows that $j\in\N$ and $m\geq a$ can be
chosen in such a way that the right hand side of the above
inequality does not exceed $\epsilon$. In this case
$\|q_{j,m}(T)x_m-f_{-n}\|\leq\epsilon$. Since from the definition of
$x_m$ it follows that $\|x_m-f_{-k}\|=\epsilon$, Corollary~\ref{dcs}
implies that $T$ has dense set of cyclic vectors.
\end{proof}

\begin{proof}[Proof of Theorem~\ref{suco}]
As we have already mentioned, if $|w_n|=|u_n|$ for any $n\in\Z$,
then the weighted bilateral shifts $T_{w,p}$ and $T_{u,p}$ are
isometrically similar for each $p\in[1,\infty]$. Thus, we can,
without loss of generality, assume that $w_n>0$ for each $n\in\Z$.

Let $f_n=c_ne_{-n}$ for $n\in\Z$, where $c_n=1$ if $n=0$,
$c_n=\widetilde w(1-n,0)$ if $n>0$ and $c_n=(\widetilde
w(1,-n))^{-1}$ if $n<0$. It is straightforward to see that
$Tf_n=f_{n+1}$ for each $n\in\Z$. Clearly
$\spann\{f_n:n\in\Z\}=\spann\{e_n:n\in\Z\}$ is dense. Now, since
$\|f_n\|=c_n$, (\ref{u123}) is equivalent to (\ref{a123}). It
remains to apply Proposition~\ref{suco3}.
\end{proof}

\section{Concluding remarks \label{s6}} \rm

It is worth mentioning another dichotomy for weighted bilateral
shifts, provided in \cite{shk2}. Namely if $T$ is a weighted
bilateral shift on $\ell_p(\Z)$ with $1<p<\infty$ then either $T$ is
supercyclic or $T^nx/\|T^nx\|$ is weakly convergent to zero as
$n\to\infty$ for each non-zero $x\in\ell_p(\Z)$. The same holds true
for weighted bilateral shifts on $c_0(\Z)$ and fails for weighted
bilateral shifts on $\ell_1(\Z)$. We would like to stress that the
following problem remains open.

\begin{problem}\label{p1} Characterize cyclic weighted bilateral
shifts on $\ell_p(\Z)$ for $1\leq p<\infty$ and on $c_0(\Z)$.
\end{problem}

As mentioned in \cite{aba}, it is not known whether certain specific
weighted bilateral shifts are cyclic. For instance, let
$0<\alpha\leq 1$, $w_n=1$ if $n\leq 1$ and $w_n=1-n^{-\alpha}$ if
$n\geq 2$. It is easy to  see that the point spectrum of $T^*_{w,1}$
is non-empty (it coincides with the unit circle) and therefore,
according to Herrero, $T_{w,1}$ is non-cyclic for any $\alpha$. From
Theorem~A\!$^2$\nobreak\hskip-1pt\nobreak G it follows that
$T_{w,p}$ is cyclic for $1<p\leq \infty$ when $\alpha>1/2$.

\begin{problem}\label{p2} Let $w$ is the above weight sequence with
$\alpha=1/2$. For which $p\in(1,\infty]$ is $T_{w,p}$ cyclic?
\end{problem}

We conjecture that the answer to the following question must be
affirmative.

\begin{problem}\label{p3} Let $2<p\leq\infty$. Does there exist a
weighted bilateral shift $T=T_{w,p}$ such that $T^n$ is cyclic for
any $n\in\N$ and $T$ is not weakly supercyclic?
\end{problem}

Note that Salas \cite{sal} proved that $I+T$ is hypercyclic for any
(unilateral) backward weighted shift $T$. We would like to raise the
following problem.

\begin{problem}\label{p4} Characterize weighted bilateral shifts $T_{w,p}$
for which $I+T$ is hypercyclic. What about supercyclicity?
\end{problem}

What the author has been able to observe so far is that for $1\leq
p\leq 2$, supercyclicity of $I+T$ implies supercyclicity of $T$ for
any weighted bilateral shift $T$ on $\ell_p(\Z)$.

Finally, we would like to raise the following problem.

\begin{problem}\label{p5} Does there exists a bounded linear
operator $T$ on a separable Banach space such that $T\oplus T$ is
cyclic and $T^2$ is non-cyclic?
\end{problem}

\bigskip

{\bf Acknowledgements.} The author would like to thank the referee
for numerous helpful remarks and suggestions.

\vfill\break

\small\rm

\vskip1truecm

\scshape

\noindent Stanislav Shkarin

\noindent Queen's University Belfast

\noindent Department of Pure Mathematics

\noindent University road, Belfast BT7 1NN, UK

\noindent E-mail address: \qquad {\tt s.shkarin@qub.ac.uk}

\end{document}